\newcommand{\impli}{\Rightarrow}
\newcommand{\Nat}{\mathbb{N}}
\newcommand{\N}{\mathbb{N}}
\newcommand{\sub}{\subseteq}
\def\epsilon{\varepsilon}
\newtheorem{theo}{Theorem}[section]
\newtheorem{lem}[theo]{Lemma}
\newtheorem{pro}[theo]{Proposition}
\newtheorem{cor}[theo]{Corollary}
\newtheorem{defi}[theo]{Definition}
\newtheorem{question}[theo]{Question}
\theoremstyle{definition}
\newtheorem{rem}[theo]{Remark}
\newcommand{\pten}{\ensuremath{\widehat{\otimes}_\pi}}
\numberwithin{equation}{section}
\title{Weak precompactness in projective tensor products}
\author[J. Rodr\'iguez]{Jos\'e Rodr\'{i}guez}
\address{Departamento de Ingenier\'{i}a y Tecnolog\'{i}a de Computadores,
Facultad de Inform\'{a}tica, Universidad de Murcia, 30100 Espinardo (Murcia), Spain.}  
\email{joserr@um.es}
\urladdr{\url{https://webs.um.es/joserr}}
\author[A. Rueda Zoca]{Abraham Rueda Zoca}
\address{Departamento de An\'{a}lisis Matem\'{a}tico, Facultad de Ciencias, Universidad de Granada, 18071 Granada, Spain.}
\email{abrahamrueda@ugr.es}
\urladdr{\url{https://arzenglish.wordpress.com}}
\subjclass[2020]{46B28, 46B50}
\keywords{Projective tensor product; $\ell_1$-sequence; weakly compact set; weakly precompact set; coarse $p$-limited set}
\thanks{The research was supported by grants PID2021-122126NB-C32 (J. Rodr\'{i}guez) and 
PID2021-122126NB-C31 (A. Rueda Zoca), funded by MCIN/AEI/10.13039/501100011033 and ``ERDF A way of making Europe'', and also  
by grant 21955/PI/22 (funded by Fundaci\'on S\'eneca - ACyT Regi\'{o}n de Murcia).
The research of A. Rueda Zoca was also supported by grants FQM-0185 and PY20\_00255 (funded by Junta de Andaluc\'ia).}
\begin{document}

\begin{abstract}
We give a sufficient condition for a pair of Banach spaces $(X,Y)$ to have the following property: 
whenever $W_1 \sub X$ and $W_2 \sub Y$ are sets such that $\{x\otimes y: \, x\in W_1, \, y\in W_2\}$ is weakly precompact in the projective tensor product
$X\pten Y$, then either $W_1$ or $W_2$ is relatively norm compact. For instance, such a property holds
for the pair $(\ell_p,\ell_q)$ if $1<p,q<\infty$ satisfy $1/p+1/q\geq 1$. Other examples are given that 
allow us to provide alternative proofs to some results on multiplication operators due to Saksman and Tylli.
We also revisit, with more direct proofs, some known results about the embeddability of $\ell_1$ into $X\pten Y$ for arbitrary Banach spaces $X$ and~$Y$, in connection
with the compactness of all operators from $X$ to~$Y^*$.
\end{abstract}

\maketitle

\section{Introduction}

Let $\mathcal{L}(X)$ denote the Banach space of all (bounded linear) operators on a Banach space~$X$. Given
$R,S\in \mathcal{L}(X)$ one can consider the multiplication operator 
$$
	\Phi_{R,S}:\mathcal{L}(X)\to \mathcal{L}(X)
$$ 
defined by 
$$
	\Phi_{R,S}(T):=R\circ T \circ S
	\quad\text{for all $T\in \mathcal{L}(X)$}.
$$
Operator ideal properties of such multiplication operators have been widely studied in the literature.
As to weak compactness, it is known that $\Phi_{R,S}$ is weakly compact whenever $R$ is compact and $S$ is weakly compact, or vice versa
(see \cite[Theorem~2.9]{sak-tyl-1}). In the other direction, it is not difficult to check that both $R$ and $S$
are weakly compact whenever they are non-zero and $\Phi_{R,S}$ is weakly compact, but in some cases one can say more.
Akemann and Wright (see \cite[Proposition~2.3]{ake-wri}) proved that for $X=\ell_2$ the weak compactness of $\Phi_{R,S}$ implies
that either $R$ or $S$ is compact. Later, Saksman and Tylli (see~\cite[Propositions~3.2 and~3.8]{sak-tyl-1}) showed 
that this property holds when $X$ is a subspace of~$\ell_p$ for $1<p<\infty$ or 
$X$ is the James space. In general, this is not true for arbitrary Banach spaces. See \cite{joh-sch,lin-sch,rac,sak-tyl-1,sak-tyl-2,sak-tyl-3} for more information on this topic.  

The previous circle of ideas is intimately related to weak compactness in projective tensor products. Let $X$ and $Y$ be Banach spaces, let
$X\pten Y$ be its projective tensor product and let $W_1 \sub X$ and $W_2 \sub Y$. Then the set
$$
	W_1 \otimes W_2:=\{x\otimes y: \, x\in W_1, \, y\in W_2\} \sub X\pten Y
$$ 
is relatively weakly compact whenever $W_1$ is relatively norm compact and $W_2$ is relatively weakly compact, or vice versa. In general, relative weak compactness
of both $W_1$ and $W_2$ is not sufficient for $W_1\otimes W_2$ to be relatively weakly compact, neither weakly precompact. 
Recall that a subset of a Banach space is said to be {\em weakly precompact} (or {\em conditionally weakly compact})
if every sequence in it admits a weakly Cauchy subsequence or, equivalently (thanks to Rosenthal's $\ell_1$-theorem; see, e.g., \cite[Theorem~10.2.1]{alb-kal}), 
if the set is bounded and contains no {\em $\ell_1$-sequence} (that is, a basic sequence which is equivalent to the usual basis of~$\ell_1$).
For instance, if $1<p,q<\infty$ satisfy $1/p+1/q\geq 1$, then the sequence $(e_n\otimes e'_n)_{n\in \N}$ in $\ell_p \pten \ell_q$ is an $\ell_1$-sequence, where
we denote by $(e_n)_{n\in \N}$ and $(e'_n)_{n\in \N}$ the usual bases of $\ell_p$ and~$\ell_q$, respectively
(see, e.g., the proof of \cite[Proposition~3.6]{avi-alt-8}). The following definition arises naturally:  

\begin{defi}\label{defi:bird}
Let $X$ and $Y$ be Banach spaces. We say that the pair $(X,Y)$ has {\em property (AW)}
if whenever $W_1 \sub X$ and $W_2 \sub Y$ are sets such that $W_1\otimes W_2$ is weakly precompact in $X\pten Y$, then
either $W_1$ or $W_2$ is relatively norm compact.
\end{defi}

Clearly, if $X$ and $Y$ are infinite-dimensional Banach spaces such that $X\pten Y$ contains no subspace isomorphic to~$\ell_1$, then
the pair $(X,Y)$ fails property~(AW) (the unit balls $B_X$ and $B_Y$ fail to be norm compact, while $B_X\otimes B_Y$ is weakly precompact in $X\pten Y$). 
Such an example is given by $(\ell_p,\ell_q)$ for $1<p,q<\infty$ with $1/p+1/q<1$, because
in this case $\ell_p\pten \ell_q$ is reflexive (see, e.g., \cite[Corollary~4.24]{rya}). In \cite[Proposition~3.17]{rod21} it is shown
that the pair $(X,Y)$ has property~(AW) whenever $X$ and $Y$ are Banach spaces 
with unconditional finite-dimensional Schauder decompositions having a disjoint lower $p$-estimate and a disjoint lower $q$-estimate, respectively, 
where $1< p,q<\infty$ satisfy $1/p+1/q\geq 1$. In particular, for $1<p,q<\infty$, the pair $(\ell_p,\ell_q)$ has property~(AW)
if and only if $1/p+1/q\geq 1$.

The aim of this paper is to go a bit further in the analysis of $\ell_1$-sequences in projective tensor products of Banach spaces, property (AW) and 
its applications to multiplication operators. The paper is organized as follows.

In Section~\ref{section:multiplication} we discuss the relationship between multiplication operators and projective tensor products, specially
in connection with weak compactness. Some known results are included for the sake of completeness.

In Section~\ref{section:AW} we focus on property (AW). The following property plays an important role in our discussion:

\begin{defi}\label{defi:Rp-intro}
Let $X$ be a Banach space and $1< p< \infty$. We say that $X$ has {\em property~$(R_p)$} if
for every relatively weakly compact set $A \sub X$ which is not relatively norm compact there is an operator
$u: X\to \ell_p$ such that $u(A)$ is not relatively norm compact.
\end{defi}

Obviously, $\ell_p$ has property~$(R_p)$ for every $1<p<\infty$. This property is closely related to the class of coarse $p$-limited sets 
introduced in~\cite{gal-mir} and, in particular, it agrees with the so-called coarse $p$-Gelfand-Phillips property when $2\leq p <\infty$
(see Remark~\ref{rem:p-limited-2}). We prove that the pair $(X,Y)$ has property~(AW)
whenever $X$ and $Y$ are Banach spaces having properties $(R_p)$ and~$(R_q)$, respectively,
where $1< p,q<\infty$ satisfy $1/p+1/q\geq 1$ (see Theorem~\ref{theo:Rp}). One of the possible approaches to the previous result 
sheds some more light on $\ell_1$-sequences in this setting: under the same assumptions on~$X$ and~$Y$, 
if $(x_n)_{n\in \N}$ and $(y_n)_{n\in \N}$ are weakly Cauchy sequences in~$X$ and~$Y$, respectively, without
norm convergent subsequences, then $(x_n\otimes y_n)_{n\in \N}$ admits an $\ell_1$-subsequence in~$X\pten Y$
(see Theorem~\ref{theo:Rp0}). As an application of Theorem~\ref{theo:Rp} and some results of Knaust and Odell~\cite{kna-ode}, we provide new proofs
of the aforementioned results on multiplication operators of Saksman and Tylli (see Corollaries~\ref{cor:subspace-lp-operator} and~\ref{cor:James-operator}).

In Section~\ref{section:l1} we include some complements about $\ell_1$-sequences in projective tensor products
and we provide more direct proofs of some known results about the embeddability of $\ell_1$ into $X\pten Y$ for arbitrary Banach spaces $X$ and~$Y$, due
to Emmanuele~\cite{emm2} and Xue, Li and Bu~\cite{bu-alt}. Namely: 
\begin{enumerate}
\item[(i)] If $X$ and $Y$ contain no subspace isomorphic to~$\ell_1$ and all operators from $X$ to~$Y^*$ are compact, then 
$X\pten Y$ contains no subspace isomorphic to~$\ell_1$, \cite[Theorem~3]{emm2} (see Theorem~\ref{theo:Emmanuele}).
\item[(ii)] If $X\pten Y$ contains no subspace isomorphic to~$\ell_1$ and either $X$ or $Y$
has an unconditional basis, then all operators from $X$ to~$Y^*$ are compact, \cite[Corollary~6]{emm2} and \cite[Theorem~4]{bu-alt} 
(see Theorem~\ref{theo:Bu}).
\end{enumerate}

\subsection*{Terminology} 
We work with real Banach spaces.
Let $X$ be a Banach space. The norm of~$X$ is denoted by $\|\cdot\|_{X}$ or simply $\|\cdot\|$. The topological dual of~$X$ is denoted by~$X^*$.
The evaluation of $x^*\in X^*$ at $x\in X$ is denoted by either $x^*(x)$ or $\langle x^*,x\rangle$. 
By a {\em subspace} of~$X$ we mean a norm closed linear subspace.
Given a set $C \sub X$, its closed convex hull and its closed linear span (i.e., the subspace of~$X$ generated by~$C$) are denoted by
$\overline{{\rm conv}}(C)$ and $\overline{{\rm span}}(C)$, respectively.
The closed unit ball of~$X$ is $B_X=\{x\in X:\|x\|\leq 1\}$. 
Given two sets $C_1,C_2 \sub X$, its Minkowski sum is $C_1+C_2:=\{x_1+x_2:\, x_1\in C_1, \, x_2\in C_2\}$.
By an {\em operator} we mean a continuous linear map between Banach spaces. Given another Banach space~$Y$, we denote by
$\mathcal{L}(X,Y)$ the Banach space of all operators from~$X$ to~$Y$, equipped with the operator norm (when $X=Y$ we just write $\mathcal{L}(X)$
instead). As usual, we denote by $T^*\in \mathcal{L}(Y^*,X^*)$ the adjoint of $T\in \mathcal{L}(X,Y)$.

We denote by $\mathcal{B}(X,Y)$
the Banach space of all continuous bilinear functionals on $X\times Y$, with the norm
$\|S\|_{\mathcal{B}(X,Y)}=\sup\{|S(x,y)|:\, x \in B_X,\, y \in B_Y\}$. Observe that the spaces
$\mathcal{B}(X,Y)$, $\mathcal{L}(X,Y^*)$ and $\mathcal{L}(Y,X^*)$ are isometric in the natural way.
Each $S\in \mathcal{B}(X,Y)$ induces
a linear functional $\tilde{S}$ in the algebraic tensor product $X\otimes Y$.
The {\em projective tensor product} of~$X$ and~$Y$, denoted by $X\pten Y$, is the completion
of $X\otimes Y$ when equipped with the norm
$$
	\|z\|=\sup\{|\tilde{S}(z)|: \, S\in \mathcal{B}(X,Y), \, \|S\|\leq 1\}, \quad z \in X\otimes Y.
$$
Hence, each $S\in \mathcal{B}(X,Y)$ induces an element of $(X\pten Y)^*$ (namely, the continuous linear extension of~$\tilde{S}$ to $X\pten Y$). In fact,
this gives an onto isometry between $\mathcal{B}(X,Y)$ and~$(X \pten Y)^*$ (see, e.g., \cite[Section~2.2]{rya}). In the sequel we will identify the spaces
$(X \pten Y)^*$, $\mathcal{B}(X,Y)$, $\mathcal{L}(X,Y^*)$ and $\mathcal{L}(Y,X^*)$ via that isometry.

\section{Multiplication operators and tensor products}\label{section:multiplication}

In this preliminary section we discuss the relationship between multiplication operators and tensor products of operators, in connection with weak compactness. 
While most of the results are already known, we include their proofs, which can help readers to focus on the subject.

\begin{lem}\label{lem:compact-tensor-rwc}
Let $X_1$ and $X_2$ be Banach spaces and let $C_1 \sub X_1$ and $C_2 \sub X_2$. The following statements hold:
\begin{enumerate}
\item[(i)] If $C_1 \otimes C_2$ is relatively weakly compact in $X_1\pten X_2$, then both $C_1$ and $C_2$ are relatively weakly compact
provided that they are not equal to~$\{0\}$. The same holds if relative weak compactness is replaced by weak precompactness.
\item[(ii)] If $C_1$ is relatively norm compact and $C_2$ is relatively weakly compact (resp., weakly precompact), 
then $C_1 \otimes C_2$ is relatively weakly compact (resp., weakly precompact) in $X_1\pten X_2$.
\end{enumerate}
\end{lem}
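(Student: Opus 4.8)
For part~(i), the plan is to reduce to a single tensor factor. Suppose $C_1\otimes C_2$ is relatively weakly compact and $C_2\neq\{0\}$; fix $y_0\in C_2$ with $y_0\neq 0$ and consider the operator $T\colon X_1\to X_1\pten X_2$ given by $T(x)=x\otimes y_0$. Since the projective norm satisfies $\|x\otimes y_0\|=\|x\|\,\|y_0\|$, this $T$ is an isomorphic embedding, so its range $X_1\otimes y_0$ is a norm-closed (hence weakly closed) subspace of $X_1\pten X_2$. Now $T(C_1)\sub C_1\otimes C_2$, and a subset of a relatively weakly compact set (resp.\ of a weakly precompact set) is again relatively weakly compact (resp.\ weakly precompact); thus $T(C_1)$ has the relevant property. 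It then remains to note that an isomorphic embedding \emph{reflects} both properties: for relative weak compactness, $\overline{T(C_1)}^{\,w}$ is weakly compact and contained in the range, so its image under the weak-to-weak continuous inverse $T^{-1}$ is a weakly compact set containing~$C_1$; for weak precompactness, a weakly Cauchy subsequence of $(T(x_n))_{n}$ transfers to a weakly Cauchy subsequence of $(x_n)_{n}$, using that every $x^*\in X_1^*$ factors as $x^*=S\circ T$ for a suitable $S\in(X_1\pten X_2)^*$ by Hahn--Banach. Hence $C_1$ has the property, and the statement for $C_2$ follows by symmetry, replacing $y_0$ by a nonzero $x_0\in C_1$ and using the isometry $X_1\pten X_2\cong X_2\pten X_1$.

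For part~(ii), the plan is a direct sequential argument, based on the identification $(X_1\pten X_2)^*=\mathcal{B}(X_1,X_2)$ together with the Eberlein--\v{S}mulian theorem (in the weakly compact case) and Rosenthal's $\ell_1$-theorem (in the weakly precompact case). Let $(x_n\otimes y_n)_{n\in\N}$ be a sequence in $C_1\otimes C_2$ with $x_n\in C_1$ and $y_n\in C_2$. Using relative norm compactness of~$C_1$, pass to a subsequence with $x_n\to x$ in norm for some $x\in X_1$; then, using relative weak compactness (resp.\ weak precompactness) of~$C_2$, pass to a further subsequence with $y_n\to y$ weakly for some $y\in X_2$ (resp.\ with $(y_n)_n$ weakly Cauchy). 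For $S\in\mathcal{B}(X_1,X_2)=(X_1\pten X_2)^*$, write $S(x_n,y_n)=S(x_n-x,y_n)+S(x,y_n)$: the first term is bounded by $\|S\|\,\|x_n-x\|\,\sup_m\|y_m\|\to 0$ (the supremum being finite because weakly convergent and weakly Cauchy sequences are bounded), and the second term converges since $y\mapsto S(x,y)$ lies in $X_2^*$. Thus $(x_n\otimes y_n)_n$ converges weakly to $x\otimes y$ (resp.\ is weakly Cauchy) in $X_1\pten X_2$, and, since $C_1\otimes C_2$ is clearly bounded, Eberlein--\v{S}mulian (resp.\ the very definition of weak precompactness) yields the conclusion.

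The arguments are short, and the only mildly delicate point — the one I would flag as the main (minor) obstacle — is the verification in part~(i) that an isomorphic embedding reflects relative weak compactness and weak precompactness; this rests on the facts that norm-closed subspaces are weakly closed, that the inverse of an isomorphic embedding is weak-to-weak continuous on its range, and (for weak precompactness) that functionals extend off a subspace by Hahn--Banach. Part~(ii) is essentially bookkeeping around the norm identity $\|x\otimes y\|=\|x\|\,\|y\|$, the estimate $|S(x,y)|\le\|S\|\,\|x\|\,\|y\|$, and the boundedness of weakly Cauchy sequences; I do not expect any genuine difficulty there.
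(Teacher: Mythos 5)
Your proposal is correct and follows essentially the same route as the paper: part~(i) via the isomorphic embeddings $x\mapsto x\otimes y_0$ and $y\mapsto x_0\otimes y$ (whose images lie in $C_1\otimes C_2$, with the cross-norm property $\|x\otimes y\|=\|x\|\,\|y\|$ guaranteeing that these embeddings reflect relative weak compactness and weak precompactness), and part~(ii) via the sequential splitting $S(x_n,y_n)=S(x_n-x,y_n)+S(x,y_n)$ using the identification $(X_1\pten X_2)^*=\mathcal{B}(X_1,X_2)$. The only difference is that you spell out in more detail the (standard) fact that isomorphic embeddings reflect these properties, which the paper leaves implicit.
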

\begin{proof}
(i) Fix $x_i\in C_i\setminus \{0\}$ for $i\in \{1,2\}$ and consider the isomorphic embeddings 
$$
	\iota_1:X_1 \to X_1\pten X_2 
	\quad\mbox{and} 
	\quad 
	\iota_2:X_2 \to X_1\pten X_2
$$
given by $\iota_1(x):=x \otimes x_2$ for all $x\in X_1$ and $\iota_2(y):=x_1\otimes y$ for all $y\in X_2$. 
Since both $\iota_1(C_1)$ and $\iota_2(C_2)$ are contained in $C_1\otimes C_2$, the conclusion follows at once.

(ii) Suppose that $C_2$ is relatively weakly compact. We can assume without loss of generality that $C_1 \sub B_{X_1}$ and $C_2\sub B_{X_2}$. 
Let $(x_n)_{n\in \N}$ and $(y_n)_{n\in \N}$ be sequences in~$C_1$ and~$C_2$, respectively. By passing to subsequences,
we can assume that $(x_n)_{n\in \N}$ is norm convergent to some $x\in X_1$ and that
$(y_n)_{n\in \N}$ is weakly convergent to some $y\in X_2$. Given any $T\in \mathcal{L}(X,Y^*)$, for each $n\in \N$ we have
\begin{eqnarray*}
	\big|\langle T,x_n\otimes y_n\rangle - \langle T,x\otimes y\rangle\big| & \leq &
	\big|\langle T(x_n)-T(x), y_n\rangle \big| + \big|\langle T(x),y_n-y\rangle\big|  \\
	& \leq & \|T\| \|x_n-x\| + \big|\langle T(x),y_n-y\rangle\big|
\end{eqnarray*}  
and so $\langle T,x_n\otimes y_n\rangle \to \langle T,x\otimes y\rangle$ as $n\to \infty$.
This shows that $(x_n\otimes y_n)_{n\in \N}$ is weakly convergent to $x\otimes y$ in $X_1\pten X_2$.
The proof that $C_1\otimes C_2$ is weakly precompact when~$C_2$ is weakly precompact is similar.
\end{proof}

\begin{rem}\label{rem:DP}
In the setting of Lemma~\ref{lem:compact-tensor-rwc}, a similar argument shows that, if either $X_1$ or $X_2$ has the Dunford-Pettis and 
both $C_1$ and $C_2$ are relatively weakly compact, then $C_1 \otimes C_2$ is relatively weakly compact in $X_1\pten X_2$
(which reproves a result of J.~Diestel, see \cite[Theorem~16]{die8}).
\end{rem}

Given two operators $T_1:Y_1\to X_1$ and $T_2:Y_2 \to X_2$, where $Y_1$, $Y_2$, $X_1$ and~$X_2$ are Banach spaces, the {\em projective tensor product}
of $T_1$ and $T_2$ is the unique operator 
$$
	T_1\otimes T_2: Y_1\pten Y_2 \to X_1 \pten X_2
$$ 
satisfying 
$$
	(T_1\otimes T_2)(y_1\otimes y_2)=T_1(y_1)\otimes T_2(y_2)
$$
for every $y_1\in Y_1$ and for every $y_2\in Y_2$ (see \cite[Proposition 2.3]{rya} for details).

\begin{lem}\label{lem:tensor-operator-rwc}
Let $Y_1$, $Y_2$, $X_1$ and~$X_2$ be Banach spaces and let $T_1:Y_1\to X_1$ and $T_2:Y_2\to X_2$ be operators. Then 
$T_1\otimes T_2$ is weakly compact if and only if $T_1(B_{Y_1})\otimes T_2(B_{Y_2})$ is relatively weakly compact in $X_1\pten X_2$. 
\end{lem}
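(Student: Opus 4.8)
The plan is to prove the equivalence in Lemma~\ref{lem:tensor-operator-rwc} by relating the weak compactness of $T_1 \otimes T_2$ to the relative weak compactness of the image of the unit ball of $Y_1 \pten Y_2$, and then comparing that image with $T_1(B_{Y_1}) \otimes T_2(B_{Y_2})$. Recall the standard fact (see, e.g., \cite[Proposition~2.8]{rya}) that the closed unit ball $B_{Y_1 \pten Y_2}$ equals $\overline{{\rm conv}}(B_{Y_1} \otimes B_{Y_2})$. An operator is weakly compact if and only if it maps the unit ball to a relatively weakly compact set, so $T_1 \otimes T_2$ is weakly compact if and only if $(T_1 \otimes T_2)(B_{Y_1 \pten Y_2})$ is relatively weakly compact in $X_1 \pten X_2$.

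For the forward implication, suppose $T_1 \otimes T_2$ is weakly compact. Since $T_1(B_{Y_1}) \otimes T_2(B_{Y_2}) = (T_1 \otimes T_2)(B_{Y_1} \otimes B_{Y_2})$ is contained in $(T_1 \otimes T_2)(B_{Y_1 \pten Y_2})$, it is a subset of a relatively weakly compact set and hence relatively weakly compact. For the converse, assume $T_1(B_{Y_1}) \otimes T_2(B_{Y_2})$ is relatively weakly compact. By Krein's theorem, its closed convex hull $\overline{{\rm conv}}\big(T_1(B_{Y_1}) \otimes T_2(B_{Y_2})\big)$ is also relatively weakly compact (in fact weakly compact). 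Now $(T_1 \otimes T_2)(B_{Y_1 \pten Y_2}) = (T_1 \otimes T_2)\big(\overline{{\rm conv}}(B_{Y_1} \otimes B_{Y_2})\big) \subseteq \overline{{\rm conv}}\big((T_1 \otimes T_2)(B_{Y_1} \otimes B_{Y_2})\big) = \overline{{\rm conv}}\big(T_1(B_{Y_1}) \otimes T_2(B_{Y_2})\big)$, where the inclusion uses that $T_1 \otimes T_2$ is a bounded linear map (so it maps closed convex hulls into closed convex hulls of images, by continuity and linearity). Hence $(T_1 \otimes T_2)(B_{Y_1 \pten Y_2})$ is relatively weakly compact, so $T_1 \otimes T_2$ is weakly compact.

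The main point to be careful about is the interplay between the continuous map $T_1 \otimes T_2$ and the closure operations: one needs $T(\overline{{\rm conv}}(C)) \subseteq \overline{{\rm conv}}(T(C))$ for a bounded operator $T$, which follows since $T^{-1}(\overline{{\rm conv}}(T(C)))$ is a closed convex set containing~$C$. Combined with Krein's theorem this closes the argument with no real obstacle; the only genuinely used external input is the description of $B_{Y_1 \pten Y_2}$ as the closed convex hull of elementary tensors from the unit balls, which is precisely the defining feature of the projective norm.
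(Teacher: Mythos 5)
Your proof is correct and follows essentially the same route as the paper: both rely on the identity $B_{Y_1\pten Y_2}=\overline{{\rm conv}}(B_{Y_1}\otimes B_{Y_2})$ together with the Krein--\v Smulyan theorem, the only difference being that you spell out the inclusion $T(\overline{{\rm conv}}(C))\subseteq\overline{{\rm conv}}(T(C))$ that the paper compresses into the single identity $\overline{{\rm conv}}(W)=\overline{(T_1\otimes T_2)(B_{Y_1\pten Y_2})}$.
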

\begin{proof}
We have $B_{Y_1\pten Y_2}=\overline{{\rm conv}}(B_{Y_1}\otimes B_{Y_2})$ (see, e.g., \cite[Proposition~2.2]{rya})
and therefore the set $W:=T_1(B_{Y_1})\otimes T_2(B_{Y_2})=(T_1\otimes T_2)(B_{Y_1}\otimes B_{Y_2})$ satisfies
$$
	\overline{{\rm conv}}(W)=\overline{(T_1\otimes T_2)(B_{Y_1\pten Y_2})}.
$$
The conclusion now follows from the Krein-\v Smulyan theorem asserting that the convex hull of a relatively weakly compact subset
of an arbitrary Banach space is relatively weakly compact as well (see, e.g., \cite[p.~51, Theorem~11]{die-uhl-J}). 
\end{proof}

\begin{pro}\label{pro:wedge-vs-tensor}
Let $X$, $X_1$, $Y$ and $Y_1$ be Banach spaces and let $S:X_1 \to X$ and $R:Y\to Y_1$ be operators. Let us consider the operator
$$
	\Phi_{R^{**},S}: \mathcal{L}(X,Y^{**}) \to \mathcal{L}(X_1,Y_1^{**})
$$
defined by
$$
	\Phi_{R^{**},S}(T):=R^{**}\circ T \circ S
	\quad
	\text{for all $T\in \mathcal{L}(X,Y^{**})$}.
$$
Then: 
\begin{enumerate}
\item[(i)] $\Phi_{R^{**},S}=(S\otimes R^{*})^{*}$, where as usual we identify 
$\mathcal{L}(X,Y^{**})=(X\pten Y^*)^*$ and $\mathcal{L}(X_1,Y_1^{**})=(X_1\pten Y_1^*)^*$.

\item[(ii)] $\Phi_{R^{**},S}$ is weakly compact if and only if $S\otimes R^*$ is weakly compact. In this case, 
the operator
$$
	\Phi_{R,S}: \mathcal{L}(X,Y) \to \mathcal{L}(X_1,Y_1)
$$
defined by
$$
	\Phi_{R,S}(T):=R\circ T \circ S \quad \text{for all $T\in \mathcal{L}(X,Y)$}
$$
is weakly compact and if, in addition, both $S$ and $R$ are non-zero, then they are weakly compact as well.
\end{enumerate}
\end{pro}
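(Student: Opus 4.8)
The plan is to prove part~(i) by a direct computation identifying the adjoint of $S\otimes R^*$ with the multiplication operator $\Phi_{R^{**},S}$, and then to derive part~(ii) as a sequence of easy consequences. For~(i), I would unravel the identifications $\mathcal{L}(X,Y^{**})=(X\pten Y^*)^*$ and $\mathcal{L}(X_1,Y_1^{**})=(X_1\pten Y_1^*)^*$ explained in the Terminology section, and test both operators against an elementary tensor $x_1\otimes y_1^*\in X_1\pten Y_1^*$. On the one hand, $(S\otimes R^*)(x_1\otimes y_1^*)=S(x_1)\otimes R^*(y_1^*)$, so for $T\in\mathcal{L}(X,Y^{**})$ viewed as an element of $(X\pten Y^*)^*$ we get $\langle (S\otimes R^*)^*(T), x_1\otimes y_1^*\rangle = \langle T, S(x_1)\otimes R^*(y_1^*)\rangle = \langle T(S(x_1)), R^*(y_1^*)\rangle$. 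On the other hand, $\Phi_{R^{**},S}(T)=R^{**}\circ T\circ S\in\mathcal{L}(X_1,Y_1^{**})$ acts on $x_1\otimes y_1^*$ by $\langle R^{**}(T(S(x_1))), y_1^*\rangle$, which equals $\langle T(S(x_1)), R^*(y_1^*)\rangle$ by the definition of the adjoint $R^{**}$. Since both functionals agree on elementary tensors, which span a dense subspace, they coincide, giving~(i).

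For part~(ii), the first equivalence is immediate from~(i) together with the standard fact that an operator is weakly compact if and only if its adjoint is (Gantmacher's theorem): $\Phi_{R^{**},S}=(S\otimes R^*)^*$ is weakly compact iff $S\otimes R^*$ is. Next, assuming $S\otimes R^*$ is weakly compact, I would show that $\Phi_{R,S}:\mathcal{L}(X,Y)\to\mathcal{L}(X_1,Y_1)$ is weakly compact. The idea is that $\Phi_{R,S}$ sits inside $\Phi_{R^{**},S}$ via the natural inclusions $\mathcal{L}(X,Y)\hookrightarrow\mathcal{L}(X,Y^{**})$ (post-composition with the canonical embedding $J_Y:Y\to Y^{**}$) and likewise for the target space: one checks on $T\in\mathcal{L}(X,Y)$ that $J_{Y_1}\circ\Phi_{R,S}(T)=J_{Y_1}\circ R\circ T\circ S=R^{**}\circ J_Y\circ T\circ S=\Phi_{R^{**},S}(J_Y\circ T)$. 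Thus $\Phi_{R,S}$ factors, up to the isometric embedding $J_{Y_1}$ composed on the left, through the weakly compact operator $\Phi_{R^{**},S}$ restricted to the (closed) subspace $\{J_Y\circ T: T\in\mathcal{L}(X,Y)\}$; since restrictions and left/right compositions of weakly compact operators are weakly compact, and since $J_{Y_1}$ is an isomorphic embedding, $\Phi_{R,S}$ itself is weakly compact.

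Finally, for the last assertion I would recall the elementary observation already mentioned in the Introduction: if $\Phi_{R,S}$ is weakly compact and both $R,S$ are non-zero, then each of $R$ and $S$ is weakly compact. Concretely, pick $x_1\in X_1$ and $y_1^*\in Y_1^*$ with $S(x_1)\neq 0$ and some normalization; composing $\Phi_{R,S}$ with suitable rank-one ``plug-in'' maps $X\ni x\mapsto x_1^*(x)\,S(x_1)\cdot(\text{something})$ and evaluation maps realizes $R$ (resp.\ $S$) as a composition of $\Phi_{R,S}$ with bounded operators on either side, hence weakly compact. I expect the only mildly delicate point to be bookkeeping the identifications in part~(i) — making sure the pairing $\langle T, x\otimes y^*\rangle = \langle T(x), y^*\rangle$ is used consistently and that $R^{**}$ genuinely restricts to the adjoint pairing $\langle R^{**}(z^{**}), y_1^*\rangle = \langle z^{**}, R^*(y_1^*)\rangle$; everything after that is routine permanence properties of weakly compact operators.
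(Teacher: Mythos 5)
Your proof is correct, and parts (i) and the first half of (ii) follow essentially the same route as the paper: the same elementary-tensor computation identifies $(S\otimes R^*)^*$ with $\Phi_{R^{**},S}$, Gantmacher's theorem gives the equivalence, and your factorization $J_{Y_1}\circ\Phi_{R,S}(T)=\Phi_{R^{**},S}(J_Y\circ T)$ is just a more explicit way of saying, as the paper does, that $\Phi_{R,S}$ is the restriction of $\Phi_{R^{**},S}$ to $\mathcal{L}(X,Y)\sub\mathcal{L}(X,Y^{**})$. The one genuine divergence is the final claim that $R$ and $S$ are themselves weakly compact: the paper deduces this from the relative weak compactness of $S(B_{X_1})\otimes R^*(B_{Y_1^*})$ in $X_1\pten Y_1^*$ (Lemma~\ref{lem:tensor-operator-rwc} plus Lemma~\ref{lem:compact-tensor-rwc}(i), which embeds $S(B_{X_1})$ and $R^*(B_{Y_1^*})$ into that set via fixed nonzero slices), whereas you factor $R$ and $S^*$ through $\Phi_{R,S}$ using rank-one operators: with $T=x^*\otimes y$ one has $\Phi_{R,S}(x^*\otimes y)=(S^*x^*)\otimes R(y)$, so evaluating at a suitable $x_1$ recovers $R$, and composing with a suitable $y_1^*$ recovers $S^*$ (note you obtain $S^*$ rather than $S$ directly, so one more application of Gantmacher is needed there). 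Both arguments are standard; yours stays entirely at the level of operator composition and does not need the tensor-product lemmas, while the paper's version reuses machinery it has already set up and that it needs elsewhere (e.g., in Corollary~\ref{cor:subspace-lp-operator}).
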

\begin{proof}
(i) Fix $T\in \mathcal{L}(X,Y^{**})$. Then $(S\otimes R^{*})^{*}(T)=T \circ (S \otimes R^*)$.
Given arbitrary $x\in X_1$ and $y^*\in Y_1^*$, we have
\begin{multline*}
	\big\langle (S\otimes R^{*})^{*}(T),x\otimes y^*\big\rangle =\big(T \circ (S \otimes R^*)\big)(x\otimes y^*)=
	T\big(S(x)\otimes R^*(y^*)\big) \\ =
	\big\langle T(S(x)),R^*(y^*)\big\rangle=
	\big\langle R^{**}(T(S(x))),y^*\big\rangle=
	\langle \Phi_{R^{**},S}(T),x\otimes y^*\rangle.
\end{multline*}
Hence, $(S\otimes R^{*})^{*}(T)=\Phi_{R^{**},S}(T)$.

(ii) The first statement follows from Gantmacher's theorem and~(i). Note that
the weak compactness of $S\otimes R^*$ is equivalent to the relative weak compactness
of the set $S(B_{X_1})\otimes R^*(B_{Y_1^*})$ in $X_1\pten Y_1^*$
(see Lemma~\ref{lem:tensor-operator-rwc}). Therefore, the second statement is a consequence of Lemma~\ref{lem:compact-tensor-rwc}(i)
and the fact that $\Phi_{R,S}$ is the restriction of $\Phi_{R^{**},S}$ to $\mathcal{L}(X,Y)$
as a subspace of $\mathcal{L}(X,Y^{**})$.
\end{proof}

The following result (in slightly less generality) was first proved in \cite[Theorem~2.9]{sak-tyl-1}. See \cite{lin-sch,rac,sak-tyl-2,sak-tyl-3} for other proofs. Our approach
is close to that of \cite[Proposition~1]{rac} and \cite[Proposition~2.3(ii)]{sak-tyl-3}.

\begin{cor}\label{cor:compact-tensor-weaklycompact}
Let $S$ and $R$ be as in Proposition~\ref{pro:wedge-vs-tensor}.
Suppose that either (i)~$S$ is compact and $R$ is weakly compact or (ii)~$S$ is weakly compact and $R$ is compact.
Then $\Phi_{R^{**},S}$ is weakly compact. 
\end{cor}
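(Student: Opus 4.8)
The plan is to reduce everything to the two lemmas already established in this section. By Proposition~\ref{pro:wedge-vs-tensor}(ii), the operator $\Phi_{R^{**},S}$ is weakly compact if and only if $S\otimes R^*$ is weakly compact, and by Lemma~\ref{lem:tensor-operator-rwc} (applied to $T_1=S\colon X_1\to X$ and $T_2=R^*\colon Y_1^*\to Y^*$) this in turn is equivalent to the set
$$
	S(B_{X_1})\otimes R^*(B_{Y_1^*})
$$
being relatively weakly compact in $X\pten Y^*$. So the whole argument amounts to checking this relative weak compactness in each of the two cases, and for that the tool is Lemma~\ref{lem:compact-tensor-rwc}(ii).

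In case~(i), the compactness of~$S$ makes $S(B_{X_1})$ relatively norm compact, while the weak compactness of~$R$ gives, via Gantmacher's theorem, that $R^*$ is weakly compact, so $R^*(B_{Y_1^*})$ is relatively weakly compact in~$Y^*$. Lemma~\ref{lem:compact-tensor-rwc}(ii) then yields at once that $S(B_{X_1})\otimes R^*(B_{Y_1^*})$ is relatively weakly compact in $X\pten Y^*$. In case~(ii), the roles are exchanged: $S$ being weakly compact makes $S(B_{X_1})$ relatively weakly compact in~$X$, while $R$ being compact makes $R^*$ compact by Schauder's theorem, so $R^*(B_{Y_1^*})$ is relatively norm compact in~$Y^*$; one more appeal to Lemma~\ref{lem:compact-tensor-rwc}(ii) (now with the compact factor on the second coordinate, which is covered by the ``or vice versa'' clause) finishes this case as well. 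Combining with the first paragraph gives the weak compactness of~$\Phi_{R^{**},S}$.

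I do not expect a genuine obstacle here: the substance of the statement has already been packaged into Proposition~\ref{pro:wedge-vs-tensor} and Lemmas~\ref{lem:compact-tensor-rwc} and~\ref{lem:tensor-operator-rwc}, so the proof is just a matter of invoking the right duality theorem (Gantmacher in one case, Schauder in the other) to pass from an operator to its adjoint and then feeding the resulting sets into Lemma~\ref{lem:compact-tensor-rwc}(ii). The only point demanding a line of care is bookkeeping the identifications $\mathcal{L}(X,Y^{**})=(X\pten Y^*)^*$ and $\mathcal{L}(X_1,Y_1^{**})=(X_1\pten Y_1^*)^*$ used in Proposition~\ref{pro:wedge-vs-tensor}(i), so that ``$S\otimes R^*$ weakly compact'' really is the pre-adjoint statement to ``$\Phi_{R^{**},S}$ weakly compact''.
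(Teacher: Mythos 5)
Your argument is correct and coincides with the paper's own proof: both reduce the statement via Proposition~\ref{pro:wedge-vs-tensor}(ii) and Lemma~\ref{lem:tensor-operator-rwc} to the relative weak compactness of $S(B_{X_1})\otimes R^*(B_{Y_1^*})$ in $X\pten Y^*$, and then feed in Schauder's (resp.\ Gantmacher's) theorem together with Lemma~\ref{lem:compact-tensor-rwc}(ii). The only cosmetic difference is that the paper proves case~(ii) and declares case~(i) ``similar'', whereas you write out both and correctly observe that one case uses Lemma~\ref{lem:compact-tensor-rwc}(ii) with the roles of the two factors exchanged, which is legitimate by the symmetry of the projective tensor product.
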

\begin{proof}
We just prove case~(ii) as the other one is similar. Since $R$ is compact, Schauder's theorem ensures
that $R^*$ is compact too. Hence, $S(B_{X_1})$ is relatively weakly compact in~$X$ and $R^*(B_{Y_1^*})$ is relatively norm compact in~$Y^*$.
Then $S(B_{X_1})\otimes R^*(B_{Y_1^*})$ is relatively weakly compact in $X\pten Y^*$ (see Lemma~\ref{lem:compact-tensor-rwc}(ii))
and so Lemma~\ref{lem:tensor-operator-rwc} applies to deduce that $S\otimes R^*$ is a weakly compact operator.
The conclusion now follows from Proposition~\ref{pro:wedge-vs-tensor}(ii).
\end{proof}

Observe that the previous arguments and Remark~\ref{rem:DP} also lead to the next result going back to \cite[Proposition~2]{rac}:

\begin{rem}\label{rem:Racher}
Let $S$ and $R$ be as in Proposition~\ref{pro:wedge-vs-tensor}. Suppose that $X$ or $Y^*$ has the Dunford-Pettis property.
If both $S$ and $R$ are weakly compact, then $\Phi_{R^{**},S}$ is weakly compact.
\end{rem}

\subsection{An observation on the Davis-Figiel-Johnson-Pe{\l}czy\'nski factorization}\label{subsection:DFJP}
Let us recall the remarkable procedure that Davis, Figiel, Johnson and Pe{\l}czy\'nski invented in~\cite{dav-alt}. 
Let $X$ be a Banach space and let $W \sub X$ be an absolutely convex bounded set. For each $n\in \N$, 
denote by $|\cdot|_n$ the Minkowski functional of the absolutely convex bounded set~$W_n:=2^n W+2^{-n} B_X \sub X$, that is,
$$
	|x|_n:=\inf\{t>0: \, x\in tW_n\}
	\quad
	\text{for all $x\in X$.}
$$
Then $X_W:=\{x\in X: \ \sum_{n=1}^\infty |x|_n^2 <\infty\}$ is a linear subspace of~$X$ which becomes 
a Banach space when equipped with the norm 
$$
	\|x\|_{X_W}:=\left(\sum_{n=1}^\infty |x|_n^2\right)^{1/2}.
$$
The identity map $J_W: X_W \to X$ is an operator and $W \sub J_W(B_{X_W})$. Moreover, the space
$X_W$ is reflexive if and only if $W$ is relatively weakly compact. 
The operator $J_W$ will be called the {\em DFJP operator
associated to~$W$}. The reader can find in \cite[Section~5.2]{ali-bur} the basics on this topic.

The absolutely convex hull (resp., closed absolutely convex hull) of a subset $C$ of a Banach space will be denoted by ${\rm aconv}(C)$
(resp., $\overline{{\rm aconv}}(C)$).

\begin{pro}\label{pro:tensor-operator-DFJP}
Let $X_1$ and~$X_2$ be Banach spaces. For each $i\in \{1,2\}$, let $C_i \sub X_i$ be a bounded set and let $T_i: Y_i \to X_i$ be the DFJP operator associated 
to~$W_i:={\rm aconv}(C_i) \sub X_i$.
Then $T_1\otimes T_2$ is weakly compact if and only if $C_1\otimes C_2$ is relatively weakly compact in $X_1\pten X_2$.
\end{pro}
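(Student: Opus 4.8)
The plan is to reduce the statement to Lemma~\ref{lem:tensor-operator-rwc} by showing that the relevant images of unit balls and the sets $C_i$ generate, up to closed convex hulls, the same weakly compact (or non-weakly-compact) sets. The key fact to exploit about the DFJP operator $T_i:Y_i\to X_i$ associated to $W_i={\rm aconv}(C_i)$ is the inclusion chain $C_i\subseteq W_i\subseteq T_i(B_{Y_i})$, together with the standard fine estimate coming from the construction: since $W_i$ is bounded, say $W_i\subseteq \rho_i B_{X_i}$, one has $T_i(B_{Y_i})\subseteq \overline{{\rm aconv}}(W_i\cup \lambda_i B_{X_i})$ for a suitable constant, but more to the point $T_i(B_{Y_i})$ is \emph{always} relatively weakly compact precisely when $W_i$ is, and in general $T_i(B_{Y_i})\subseteq N_i\,\overline{{\rm aconv}}(W_i)$ fails — so I will instead argue directly at the level of weak precompactness/weak compactness of the tensor sets rather than trying to sandwich $T_i(B_{Y_i})$ inside a dilate of $W_i$.

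First I would prove the ``only if'' direction. Suppose $T_1\otimes T_2$ is weakly compact. By Lemma~\ref{lem:tensor-operator-rwc}, $T_1(B_{Y_1})\otimes T_2(B_{Y_2})$ is relatively weakly compact in $X_1\pten X_2$. Since $C_i\subseteq W_i\subseteq T_i(B_{Y_i})$ for $i\in\{1,2\}$, we have the inclusion $C_1\otimes C_2\subseteq T_1(B_{Y_1})\otimes T_2(B_{Y_2})$, and a subset of a relatively weakly compact set is relatively weakly compact; hence $C_1\otimes C_2$ is relatively weakly compact in $X_1\pten X_2$.

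The converse is the direction that carries the content, and I expect it to be the main obstacle. Assume $C_1\otimes C_2$ is relatively weakly compact in $X_1\pten X_2$. By Lemma~\ref{lem:compact-tensor-rwc}(i), each $C_i$ is relatively weakly compact in $X_i$ (the degenerate cases $C_i=\{0\}$ being trivial, since then $T_i=0$ and $T_1\otimes T_2=0$). Consequently $W_i={\rm aconv}(C_i)$ is relatively weakly compact (Krein--\v Smulyan), and therefore the DFJP space $Y_i$ is reflexive, i.e.\ $B_{Y_i}$ is weakly compact; hence $T_i(B_{Y_i})$ is relatively weakly compact in $X_i$. Now I want to conclude that $T_1(B_{Y_1})\otimes T_2(B_{Y_2})$ is relatively weakly compact and invoke Lemma~\ref{lem:tensor-operator-rwc}. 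The natural route is the factorization $T_1\otimes T_2 = (J'_1\otimes J'_2)\circ(\text{reflexive-space map})$: since $Y_1$ and $Y_2$ are reflexive, $Y_1\pten Y_2$ need not be reflexive, but $T_1\otimes T_2$ restricted through the reflexive factors still must land in a weakly compact set. More cleanly: $(T_1\otimes T_2)(B_{Y_1}\otimes B_{Y_2}) = T_1(B_{Y_1})\otimes T_2(B_{Y_2})$, and reflexivity of each $Y_i$ gives that $B_{Y_1}\otimes B_{Y_2}$ — while not weakly compact in $Y_1\pten Y_2$ in general — has the property that its image under $T_1\otimes T_2$ is contained in $T_1(B_{Y_1})\otimes T_2(B_{Y_2})$, a set both of whose factors are relatively weakly compact, one of which ($T_i(B_{Y_i})$) is even relatively norm compact exactly when $C_i$ is. So the final step is to show: if $A_1\subseteq X_1$ and $A_2\subseteq X_2$ are relatively weakly compact and $C_1\otimes C_2$ is relatively weakly compact with $C_i$ dense-in-aconv-sense in a set controlling $A_i$, then $A_1\otimes A_2$ is relatively weakly compact. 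This is where I would lean on the DFJP construction once more: approximating elements of $T_i(B_{Y_i})$ by convex combinations from $W_i$ (using that $T_i(B_{Y_i})\subseteq \overline{{\rm aconv}}(2^nW_i + 2^{-n}B_{X_i})$ for each $n$, so one gets elements of $T_i(B_{Y_i})$ as limits of sequences of the form $2^n w + 2^{-n}b$) and passing to weak limits via Eberlein--\v Smulyan. The obstacle is handling the $2^{-n}B_{X_i}$ error terms uniformly in the projective norm; I would control them by noting that $B_{X_1}\otimes B_{X_2}$, $W_1\otimes B_{X_2}$, etc., interact well with the projective tensor norm ($\|x\otimes y\|_\pi=\|x\|\,\|y\|$) so that the error contributions to the tensor products go to zero in norm, and then deduce relative weak compactness of $A_1\otimes A_2 = T_1(B_{Y_1})\otimes T_2(B_{Y_2})$ from that of $C_1\otimes C_2$ together with Lemma~\ref{lem:compact-tensor-rwc}(ii) applied to the error terms. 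Applying Lemma~\ref{lem:tensor-operator-rwc} then yields that $T_1\otimes T_2$ is weakly compact.
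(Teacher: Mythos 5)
Your ``only if'' direction is fine and is the same as the paper's. In the converse, your overall strategy also coincides with the paper's (approximate $T_i(B_{Y_i})$ by a dilate of $W_i$ plus a small multiple of $B_{X_i}$, tensor the two approximations, and absorb the errors in the projective norm), but the single step that carries all the difficulty is exactly the one you assert without justification: that ``the error contributions to the tensor products go to zero in norm.'' With the decomposition as you set it up, this is false. From $T_i(B_{Y_i})\subseteq 2^n\overline{W_i}+2^{-n}B_{X_i}$ (up to constants), tensoring produces cross terms of the form $2^n\overline{W_1}\otimes 2^{-n}B_{X_2}$, and the projective norm of such an elementary tensor is bounded only by $2^n\cdot 2^{-n}\sup_{w\in W_1}\|w\|=\sup_{w\in W_1}\|w\|$, which does \emph{not} tend to $0$ as $n\to\infty$: the dilation factor $2^n$ on the ``good'' component exactly cancels the smallness $2^{-n}$ of the ``bad'' one. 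So the errors are $O(1)$, not $o(1)$, and the perturbation argument collapses. (Nor can Lemma~\ref{lem:compact-tensor-rwc}(ii) absorb these cross terms, since neither factor of $\overline{W_1}\otimes B_{X_2}$ is relatively norm compact in general.)

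The missing idea --- which is how the paper's proof actually closes this gap --- is that the $2^n\overline{W_i}$-component of a point of $T_i(B_{Y_i})$ is automatically norm-bounded \emph{independently of $n$}: if $x=u+v$ with $x\in T_i(B_{Y_i})\subseteq\|T_i\|B_{X_i}$, $u\in 2^n\overline{W_i}$ and $\|v\|\leq 2^{-n}$, then $\|u\|\leq\|T_i\|+2^{-n}$. This allows one to replace $2^n\overline{W_i}$ by its intersection with a fixed ball (the paper does this, after normalizing, via \cite[Lemma~3.10]{avi-ple-rod-5}, obtaining sets $U_i\subseteq B_{X_i}$ with $U_i$ contained in a dilate of $\overline{W_i}$), and only then do the cross terms $U_1\otimes 2^{-n}B_{X_2}$, etc., really become $O(2^{-n})$ in the projective norm, while $U_1\otimes U_2$ sits inside a (large but, for each fixed $n$, fixed) dilate of $\overline{{\rm aconv}}(C_1\otimes C_2)$, which is weakly compact by hypothesis and Krein--\v Smulyan. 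The concluding tool is then the standard fact that a set lying within distance $\epsilon$ of some weakly compact set for every $\epsilon>0$ is relatively weakly compact (\cite[Lemma~13.32]{fab-ultimo}), not Eberlein--\v Smulyan or Lemma~\ref{lem:compact-tensor-rwc}(ii). Your intermediate observations (reflexivity of the $Y_i$, relative weak compactness of $T_i(B_{Y_i})$) are correct but are neither needed nor sufficient here.
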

\begin{proof}
Since 
$$
	(T_1\otimes T_2)(B_{Y_1}\otimes B_{Y_2}) = T_1(B_{Y_1})\otimes T_2(B_{Y_2}) \supseteq W_1 \otimes W_2 \supseteq C_1\otimes C_2,
$$ 
the set $C_1\otimes C_2$ is relatively weakly compact in~$X_1\pten X_2$ whenever $T_1\otimes T_2$ is weakly compact.

To prove the converse, let us assume that $C_1\otimes C_2$ is relatively weakly compact
in $X_1\pten X_2$. By the Krein-\v Smulyan theorem (see, e.g., \cite[p.~51, Theorem~11]{die-uhl-J}), 
its absolutely convex hull ${\rm aconv}(C_1\otimes C_2)$ is relatively weakly compact in $X_1\pten X_2$ and, hence, the same holds 
for $W_1\otimes W_2 \sub {\rm aconv}(C_1\otimes C_2)$. The statement is obvious if either $W_1$ or $W_2$ equals to~$\{0\}$, so
we assume that this is not the case. Then $W_1$ and $W_2$ are relatively weakly compact (see Lemma~\ref{lem:compact-tensor-rwc}(i)). 

Fix $\epsilon>0$. Then there is $n\in \N$ such that 
$$
	T_i(B_{Y_i}) \sub n \overline{W_i}+\epsilon B_{X_i} 
	\quad \text{for $i\in \{1,2\}$}
$$
(see, e.g., the proof of \cite[Theorem~5.37(4)]{ali-bur}). Writing $\rho_i:=\|T_i\|^{-1}$, we have
$$
	H_i:=\rho_i T_i(B_{Y_i}) \sub \rho_i n \overline{W_i}+\rho_i\epsilon B_{X_i} 
	\quad \text{for $i\in \{1,2\}$}.
$$
Since $H_i \sub B_{X_i}$, the set 
$$
	U_i:=\frac{\rho_i n}{1+\rho_i \epsilon}\overline{W_i} \cap B_{X_i} 
$$
satisfies
$$
	H_i \sub U_i + \frac{2\rho_i\epsilon}{1+\rho_i\epsilon}B_{X_i}
	\quad\text{for $i\in \{1,2\}$}
$$
(see \cite[Lemma~3.10]{avi-ple-rod-5}). It follows that
\begin{equation}\label{equation:G}
	T_1(B_{Y_1}) \otimes T_2(B_{Y_2}) \sub V + f(\epsilon)B_{X_1\pten X_2},
\end{equation}
where
$$
	V:=\rho_1^{-1} U_1\otimes \rho_2^{-1} U_2 \sub X_1\pten X_2
$$
and
$$
	f(\epsilon):=2\epsilon\left(\frac{1}{1+\rho_1\epsilon}+\frac{1}{1+\rho_2\epsilon}+
	\left(\frac{1}{1+\rho_1\epsilon}\right)\left(\frac{2\epsilon}{1+\rho_2\epsilon}\right)\right).
$$
Writing $\theta_i:= n(1+\rho_i \epsilon)^{-1}$ for $i\in \{1,2\}$, we have
$$	
	V \sub \theta_1 \overline{W_1} \otimes \theta_2 \overline{W_2}=\theta_1\theta_2 \big(\overline{W_1}\otimes \overline{W_2}\big)
	\sub \theta_1\theta_2 \overline{W_1\otimes W_2} \sub \theta_1\theta_2 \overline{{\rm aconv}}(C_1\otimes C_2),
$$
and so \eqref{equation:G} yields
$$
		T_1(B_{Y_1}) \otimes T_2(B_{Y_2}) \sub \theta_1\theta_2 \overline{{\rm aconv}}(C_1\otimes C_2) + f(\epsilon)B_{X_1\pten X_2}.
$$
Notice that $\theta_1\theta_2 \overline{{\rm aconv}}(C_1\otimes C_2)$ is weakly compact in $X_1\pten X_2$ and that
$f(\epsilon)\to 0$ as $\epsilon\to 0$. It follows that $T_1(B_{Y_1}) \otimes T_2(B_{Y_2})$ is relatively weakly compact 
in $X_1\pten X_2$ (see, e.g., \cite[Lemma~13.32]{fab-ultimo}).
Finally, an appeal to Lemma~\ref{lem:tensor-operator-rwc} ensures that the operator $T_1\otimes T_2$ is weakly compact.
\end{proof}

\section{Property~(AW)}\label{section:AW}

The following notion was introduced in~\cite{gal-mir}:

\begin{defi}\label{defi:p-limited-set}
Let $X$ be a Banach space and $1< p< \infty$. A set $A \sub X$ is said to be {\em coarse $p$-limited} if 
$T(A)$ is relatively norm compact for every $T\in \mathcal{L}(X,\ell_p)$.
\end{defi}

\begin{rem}\label{rem:Rp-charact}
Let $X$ be a Banach space and $1< p< \infty$. The following statements are equivalent:
\begin{enumerate}
\item[(i)] $X$ has property $(R_p)$ (see Definition~\ref{defi:Rp-intro}), i.e., every coarse $p$-limited relatively weakly compact subset of~$X$ is relatively norm compact.
\item[(ii)] Every coarse $p$-limited weakly null sequence in~$X$ is norm null.
\item[(iii)] Every coarse $p$-limited weakly precompact subset of~$X$ is relatively norm compact.
\end{enumerate}
\end{rem}
\begin{proof}
The implications (iii)$\impli$(i)$\impli$(ii) are obvious. For (ii)$\impli$(iii), let $A \sub X$ be a coarse $p$-limited weakly precompact subset of~$X$. 
Let $(x_n)_{n\in \N}$ be a sequence in~$A$. By passing to a subsequence, 
we can assume that $(x_n)_{n\in \N}$ is weakly Cauchy. We claim that $(x_n)_{n\in \N}$ is norm Cauchy, which is enough to conclude (iii). Indeed, if this is not the case, then
we can find $\epsilon>0$ and two subsequences $(x_{n_k})_{k\in \N}$ and $(x_{m_k})_{k\in \N}$ such that $\|x_{n_{k}}-x_{m_k}\|>\epsilon$ for all $k\in \N$. Note that
for every $T\in \mathcal{L}(X,\ell_p)$ the set $T(A)$ is relatively norm compact in~$\ell_p$ and so the same holds for 
$$
	\{T(x_{n_{k}}-x_{m_k}): k\in \N\} \sub T(A)-T(A).
$$ 
Hence, $(x_{n_{k}}-x_{m_k})_{k\in \N}$ is a coarse $p$-limited, weakly null but not norm null sequence, a contradiction.
\end{proof}

\begin{rem}\label{rem:p-limited-2}
Let $X$ be a Banach space and $2\leq p <\infty$. Then every coarse $p$-limited subset of~$X$ is weakly precompact (see \cite[Proposition~3]{gal-mir}).
Consequently, $X$~has property~$(R_p)$ if and only if it has the {\em coarse $p$-Gelfand-Phillips property} of~\cite{gal-mir}, i.e.,
every coarse $p$-limited subset of~$X$ is relatively norm compact.
\end{rem}

The following result provides a sufficient condition on a pair of Banach spaces to have property~(AW) (see Definition~\ref{defi:bird}):

\begin{theo}\label{theo:Rp}
Let $X$ and $Y$ be Banach spaces such that $X$ has property~$(R_p)$ and  
$Y$ has property~$(R_q)$ for some $1< p,q<\infty$ satisfying $1/p+1/q\geq 1$. Then the pair $(X,Y)$ has property~(AW).
\end{theo}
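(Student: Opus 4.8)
The plan is to prove the contrapositive by a sequential argument: assuming that $W_1\otimes W_2$ is weakly precompact in $X\pten Y$ while neither $W_1$ nor $W_2$ is relatively norm compact, I want to build an $\ell_1$-sequence inside $W_1\otimes W_2$, contradicting weak precompactness via Rosenthal's theorem. First I would pass to sequences: since $W_1$ is not relatively norm compact, by Lemma~\ref{lem:compact-tensor-rwc}(i) it is still weakly precompact, so I can extract a weakly Cauchy sequence $(x_n)_{n\in\N}$ in $W_1$ with no norm convergent subsequence; symmetrically a weakly Cauchy $(y_n)_{n\in\N}$ in $W_2$ with no norm convergent subsequence. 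The natural target is then to show $(x_n\otimes y_n)_{n\in\N}$ has an $\ell_1$-subsequence in $X\pten Y$ — this is exactly the content announced as Theorem~\ref{theo:Rp0}, and Theorem~\ref{theo:Rp} follows from it immediately since $(x_n\otimes y_n)_{n\in\N}\sub W_1\otimes W_2$.

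For that core step I would use property~$(R_p)$ and $(R_q)$ to leave the purely qualitative world and land in $\ell_p$ and $\ell_q$. Differences $x_n - x_m$ form a coarse $p$-limited set precisely when they cannot be separated by any operator into $\ell_p$; since $(x_n)$ is weakly Cauchy but not norm Cauchy (after a further subsequence) and $X$ has $(R_p)$, the characterization in Remark~\ref{rem:Rp-charact} forbids $\{x_n - x_m : n,m\}$ from being coarse $p$-limited, so there is an operator $u:X\to\ell_p$ with $u(\{x_n\})$ not relatively norm compact. Passing to a subsequence, I would arrange that $(u(x_n))_{n\in\N}$ is weakly Cauchy in $\ell_p$ but bounded away from norm-Cauchy, i.e. $(u(x_{2n}) - u(x_{2n-1}))_{n}$ is weakly null and bounded below in norm; a gliding-hump / small-perturbation argument then replaces it by a sequence equivalent to a block basis of the unit vector basis of $\ell_p$, so after normalizing we essentially have vectors $\xi_n\in\ell_p$ behaving like $e_n$. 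Do the same on the $Y$ side to get $v:Y\to\ell_q$ and block vectors $\eta_n$ behaving like $e'_n\in\ell_q$. Now $u\otimes v:X\pten Y\to\ell_p\pten\ell_q$ is an operator, and $(u\otimes v)(x_n'\otimes y_n') = u(x_n')\otimes v(y_n')$ is, up to small perturbations and passing to the even-indexed differences, comparable to $(e_n\otimes e'_n)_{n\in\N}$ in $\ell_p\pten\ell_q$, which is an $\ell_1$-sequence because $1/p+1/q\geq 1$ (the fact recalled in the introduction, cf. \cite{avi-alt-8}). Since the continuous linear image of a weakly Cauchy sequence is weakly Cauchy, pulling this back shows $(x_n\otimes y_n)_{n\in\N}$ cannot be weakly precompact, the desired contradiction.

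The main obstacle I anticipate is bookkeeping in the passage from "weakly Cauchy, not norm Cauchy, not coarse $p$-limited" to an honest copy of the $\ell_p$-basis: one must handle the differences $d_n := x_{2n}-x_{2n-1}$ (weakly null, norm bounded below) rather than $(x_n)$ itself, show $\{d_n\}$ fails to be coarse $p$-limited — this needs the observation that relative norm compactness of $T(\{x_n\})$ for all $T$ would force relative norm compactness of $T(\{d_n\})$, hence by $(R_p)$ norm-nullity of $(d_n)$, contradiction — then extract an operator $u$ with $(u(d_n))$ not relatively compact, refine to a weakly null non-norm-null sequence in $\ell_p$, and apply the standard Bessaga–Pe{\l}czy\'nski selection principle to get a block basic subsequence equivalent to the $\ell_p$ basis. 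Carrying the two approximations (on the $X$ and $Y$ sides) simultaneously through the single operator $u\otimes v$, and controlling the perturbation error in the projective norm so that the perturbed sequence is still an $\ell_1$-sequence, is where the estimates have to be done carefully; everything else is soft. An alternative, slightly slicker route avoids explicit block bases: once $u(\{x_n\})$ and $v(\{y_n\})$ are non-relatively-compact relatively weakly compact sets in $\ell_p$ and $\ell_q$, one may invoke that $(\ell_p,\ell_q)$ itself has property~(AW) (the known case recalled from \cite[Proposition~3.17]{rod21}) applied to $u(W_1)$ and $v(W_2)$ — but since we only have sequences, reducing cleanly to that statement still requires essentially the same extraction, so I would present the direct $\ell_1$-sequence construction.
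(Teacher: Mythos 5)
Your overall strategy is sound and in fact contains both of the paper's proofs in embryo. The ``slicker route'' you mention at the end \emph{is} the paper's first proof, and you underestimate how clean it is: property $(R_p)$, in the equivalent form of Remark~\ref{rem:Rp-charact}(iii), applies directly to the weakly precompact, non-relatively-compact \emph{sets} $W_1$ and $W_2$, producing $u:X\to\ell_p$ and $v:Y\to\ell_q$ with $u(W_1)$ and $v(W_2)$ not relatively norm compact; then property~(AW) of $(\ell_p,\ell_q)$ applied to these images contradicts the weak precompactness of $(u\otimes v)(W_1\otimes W_2)$. No sequence extraction is needed at all, so your reason for discarding this route does not hold.

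In your ``direct'' construction there is one genuine soft spot. To get a weakly null seminormalized sequence in $\ell_p$ you pass to the even--odd differences $d_n:=x_{2n}-x_{2n-1}$. But then the $\ell_1$-sequence you ultimately produce upstairs is $(d_n\otimes e_n)_{n}$ with $e_n:=y_{2n}-y_{2n-1}$, and
$$
	d_n\otimes e_n=x_{2n}\otimes y_{2n}-x_{2n}\otimes y_{2n-1}-x_{2n-1}\otimes y_{2n}+x_{2n-1}\otimes y_{2n-1}
$$
contains the cross terms $x_{2n}\otimes y_{2n-1}$ and $x_{2n-1}\otimes y_{2n}$, which are \emph{not} members (nor differences of members) of the sequence $(x_n\otimes y_n)_{n\in\N}$. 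Hence this route does not deliver the intermediate statement you announce (Theorem~\ref{theo:Rp0}: an $\ell_1$-subsequence of $(x_n\otimes y_n)_{n\in\N}$ itself), and the sentence ``pulling this back shows $(x_n\otimes y_n)_{n\in\N}$ cannot be weakly precompact'' is unjustified. Theorem~\ref{theo:Rp} still follows from your construction, because all four terms above lie in $W_1\otimes W_2$ and a finite Minkowski sum of weakly precompact sets is weakly precompact, so $\{d_n\otimes e_n\}$ would be weakly precompact if $W_1\otimes W_2$ were --- but you need to say this. The paper avoids the issue entirely: it applies $u$ \emph{first}, uses the weak sequential completeness of $\ell_p$ to subtract the weak limit ($z_n:=u(x_n)-z$), and then, instead of ``controlling the perturbation error in the projective norm'' (the leftover terms $z\otimes w_k+z_{n_k}\otimes w+z\otimes w$ are \emph{not} small in norm, only weakly convergent, so a norm-perturbation argument would fail there), it observes that a sequence of the form (an $\ell_1$-sequence) $+$ (a weakly convergent sequence) has no weakly Cauchy subsequence and therefore, by Rosenthal's theorem, admits an $\ell_1$-subsequence. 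That soft trick is the ingredient your sketch is missing; with it, the genuine $\ell_1$-subsequence of $(x_n\otimes y_n)_{n\in\N}$ is recovered and both Theorem~\ref{theo:Rp0} and Theorem~\ref{theo:Rp} follow.
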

\begin{proof}[First proof of Theorem~\ref{theo:Rp}]
By contradiction, suppose that there exist non relatively norm compact sets $W_1\sub X$ and $W_2 \sub Y$ such that
$W_1\otimes W_2$ is weakly precompact in $X\pten Y$. Then $W_1$ and $W_2$ are weakly precompact (see Lemma~\ref{lem:compact-tensor-rwc}(i)).
The assumptions on~$X$ and $Y$ ensure the existence of operators
$u:X\to \ell_p$ and $v:Y \to \ell_q$ such that $u(W_1)$ and $v(W_2)$ are not relatively norm compact. As we already mentioned in the introduction,
the condition $1/p+1/q\geq 1$ implies that the pair $(\ell_p,\ell_q)$ has property~(AW) and so
$u(W_1)\otimes u(W_2)$ is not weakly precompact in $\ell_p\pten \ell_q$. This is a contradiction, because
$u\otimes v: X\pten Y \to \ell_p\pten \ell_q$ is an operator, $W_1\otimes W_2$ is weakly precompact in $X\pten Y$
and $u(W_1)\otimes u(W_2)=(u\otimes v)(W_1\otimes W_2)$. 
\end{proof}

Theorem~\ref{theo:Rp0} below will provide a different approach to Theorem~\ref{theo:Rp}. It is convenient to
introduce first some terminology:

\begin{defi}\label{defi:Tp-plus}
Let $X$ be a Banach space and $1<p<\infty$. We say that $X$ has {\em property~$(P_p)$} if every seminormalized weakly null sequence $(x_n)_{n\in \N}$ in~$X$
admits a basic subsequence $(x_{n_k})_{k\in \N}$ which is equivalent to the usual basis of~$\ell_p$ 
and such that $\overline{{\rm span}}(\{x_{n_k}:k\in \N\})$ is complemented in~$X$.
\end{defi}

The following fact is well-known (see, e.g., \cite[Proposition~2.1.3]{alb-kal}): 

\begin{pro}\label{pro:lp}
For every $1< p<\infty$ the space $\ell_p$ has property~$(P_p)$.
\end{pro}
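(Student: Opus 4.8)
The plan is to reduce to the classical situation of a block basic sequence of the unit vector basis $(e_n)_{n\in\N}$ of $\ell_p$ --- where matters are essentially isometric --- and then to transport the conclusion back by a small-perturbation automorphism of the whole space $\ell_p$.

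First I would normalize and extract a block sequence. Let $(x_n)_{n\in\N}$ be seminormalized and weakly null in $\ell_p$, say $0<2\delta\le\|x_n\|\le 1$ for all $n$, and fix $\epsilon\in(0,\delta)$. Since weak convergence in $\ell_p$ is coordinatewise convergence, a standard gliding-hump argument (the Bessaga--Pe{\l}czy\'nski selection principle) produces a subsequence $(x_{n_k})_{k\in\N}$, pairwise disjoint finite intervals $I_1<I_2<\dots$ of $\N$, and vectors $u_k$ supported on $I_k$ with $\|x_{n_k}-u_k\|<2^{-k}\epsilon$; hence $\sum_k\|x_{n_k}-u_k\|<\epsilon$ and $\delta\le\|u_k\|_p\le 1+\epsilon$ for all $k$.

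Next I would record the two standard facts about this block sequence. Because the $u_k$ have pairwise disjoint supports, $\|\sum_k a_ku_k\|_p^p=\sum_k|a_k|^p\|u_k\|_p^p$, so $(u_k)_{k\in\N}$ is a basic sequence equivalent to the usual basis of $\ell_p$. Writing $u_k=\sum_{i\in I_k}a^{(k)}_ie_i$ and letting $q$ be the conjugate exponent, define $u_k^*:=\|u_k\|_p^{-p}\sum_{i\in I_k}|a^{(k)}_i|^{p-1}{\rm sgn}(a^{(k)}_i)\,e_i^*\in\ell_q=\ell_p^*$. A direct computation using $(p-1)q=p$ (equivalently $p/q=p-1$) gives $\|u_k^*\|_q=\|u_k\|_p^{-1}$ and $\langle u_k^*,u_j\rangle=\delta_{kj}$, and by H\"older $|\langle u_k^*,x\rangle|\le\|u_k\|_p^{-1}\|P_{I_k}x\|_p$ for $x\in\ell_p$, where $P_{I_k}$ denotes the coordinate projection onto $I_k$. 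It follows that $Px:=\sum_k\langle u_k^*,x\rangle u_k$ defines a bounded linear projection of $\ell_p$ onto $\overline{{\rm span}}(\{u_k:k\in\N\})$, since $\|Px\|_p^p=\sum_k|\langle u_k^*,x\rangle|^p\|u_k\|_p^p\le\sum_k\|P_{I_k}x\|_p^p\le\|x\|_p^p$.

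Finally I would carry out the perturbation. Consider $V:\ell_p\to\ell_p$, $Vx:=\sum_k\langle u_k^*,x\rangle(x_{n_k}-u_k)$. Since $|\langle u_k^*,x\rangle|\le\|u_k\|_p^{-1}\|P_{I_k}x\|_p\le\delta^{-1}\|x\|$, we get $\|Vx\|\le\delta^{-1}\|x\|\sum_k\|x_{n_k}-u_k\|<\delta^{-1}\epsilon\|x\|<\|x\|$, so $Q:=I_{\ell_p}+V$ is an automorphism of $\ell_p$; and since $\langle u_k^*,u_k\rangle=1$ we have $Qu_k=u_k+(x_{n_k}-u_k)=x_{n_k}$ for every $k$. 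Hence $(x_{n_k})_{k\in\N}$, being the image of $(u_k)_{k\in\N}$ under an automorphism of $\ell_p$, is a basic sequence equivalent to the usual basis of $\ell_p$, and $\overline{{\rm span}}(\{x_{n_k}:k\in\N\})=Q\big(\overline{{\rm span}}(\{u_k:k\in\N\})\big)$ is complemented in $\ell_p$ (with projection $QPQ^{-1}$). The only genuine subtlety is this last step: one must realize the perturbation as an automorphism of the \emph{whole} space $\ell_p$ rather than merely an isomorphism between $\overline{{\rm span}}(\{u_k\})$ and $\overline{{\rm span}}(\{x_{n_k}\})$, since it is the global statement that transports complementedness; this is precisely what forces the bookkeeping with the normalized, disjointly supported functionals $u_k^*$ in the third step and the smallness requirement $\sum_k\|x_{n_k}-u_k\|<\delta$ arranged in the gliding-hump step.
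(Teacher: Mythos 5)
Your argument is correct and is exactly the standard Bessaga--Pe{\l}czy\'nski gliding-hump-plus-small-perturbation proof; the paper does not prove this proposition itself but cites it as \cite[Proposition~2.1.3]{alb-kal}, whose proof is the one you have written out (disjointly supported blocks, the norm-one projection built from the H\"older-dual functionals $u_k^*$, and the automorphism $I+V$ transporting complementedness). All the estimates check out, including $\langle u_k^*,u_j\rangle=\delta_{kj}$, $\|u_k^*\|_q=\|u_k\|_p^{-1}$, and $\|V\|\le\delta^{-1}\epsilon<1$.
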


\begin{rem}\label{rem:Rp-Tp}
Let $X$ be a Banach space and $1< p< \infty$. If $X$ has property~$(P_p)$, then it also has property~$(R_p)$.
\end{rem}
\begin{proof}
It suffices to prove that any coarse $p$-limited weakly null sequence $(x_n)_{n\in \N}$ in~$X$ is norm null
(see Remark~\ref{rem:Rp-charact}).
By contradiction, suppose this is not the case. Then $(x_n)_{n\in \N}$ admits a seminormalized subsequence and so there is a further  
subsequence $(x_{n_k})_{k\in \N}$ which is a basic sequence equivalent to the usual basis $(e_k)_{k\in \N}$ of~$\ell_p$ and
such that $X_0:=\overline{{\rm span}}(\{x_{n_k}:k\in \N\})$ is complemented in~$X$. Let $T_0: X_0 \to \ell_p$ be the isomorphism satisfying $T(x_{n_k})=e_k$ for all $k\in \N$.  
Since $X_0$ is complemented in~$X$, we can extend $T_0$ to some operator $T:X \to\ell_p$. But $\{T(x_{n_k}):k\in \N\}=\{e_k:k\in \N\}$ is
not relatively norm compact in~$\ell_p$, which contradicts the fact that $(x_{n})_{n\in \N}$ is coarse $p$-limited.  
\end{proof}

\begin{theo}\label{theo:Rp0}
Let $X$ and $Y$ be Banach spaces such that $X$ has property~$(R_p)$ and  
$Y$ has property~$(R_q)$ for some $1< p,q<\infty$ satisfying $1/p+1/q\geq 1$.
Let $(x_n)_{n\in \N}$ and $(y_n)_{n\in \N}$ be weakly Cauchy sequences in~$X$ and~$Y$, respectively, without norm convergent subsequences.
Then $(x_n\otimes y_n)_{n\in \N}$ admits an $\ell_1$-subsequence in $X\pten Y$.
\end{theo}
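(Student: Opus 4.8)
The plan is to reduce to the scalar model space $\ell_p \pten \ell_q$ by transporting the two weakly Cauchy sequences through well-chosen operators, and then to exhibit an $\ell_1$-subsequence in the image. First I would split into two cases according to whether the sequences have weak limits. After passing to subsequences, I may assume $(x_n)$ and $(y_n)$ are seminormalized (they have no norm convergent subsequences, hence after a subsequence they stay bounded away from any cluster point). If $(x_n)$ converges weakly to some $x\in X$, replace it by $(x_n - x)$, which is a seminormalized weakly null sequence; do the same for $(y_n)$ if it converges weakly, replacing it by $(y_n-y)$. The point is that the difference $x_n\otimes y_n - (x_n-x)\otimes(y_n-y) = x\otimes y_n + x_n\otimes y - x\otimes y$ lies, up to the constant term, in a relatively norm compact (indeed weakly convergent, by Lemma~\ref{lem:compact-tensor-rwc}(ii) type arguments) perturbation of $X\pten Y$; so an $\ell_1$-subsequence of the weakly null product gives an $\ell_1$-subsequence of the original product, because adding a norm convergent (or even weakly convergent with the right structure) sequence does not destroy $\ell_1$-ness. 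Actually the cleanest route: reduce to the case where both sequences are weakly null — one checks that a bounded perturbation by a weakly convergent sequence of the specific form above, when the other factor is bounded, changes $x_n\otimes y_n$ by a weakly convergent sequence plus a norm-null tail, and a sequence that is a small perturbation of an $\ell_1$-sequence is again an $\ell_1$-sequence (this uses that being an $\ell_1$-sequence is stable under small perturbations, a standard basic-sequence stability fact).

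Once both $(x_n)$ and $(y_n)$ are seminormalized and weakly null, I invoke property $(R_p)$ of $X$: since $(x_n)$ is weakly null and not norm null, Remark~\ref{rem:Rp-charact} says it is not coarse $p$-limited, so there is an operator $u: X \to \ell_p$ with $u(\{x_n : n\in\N\})$ not relatively norm compact. After a subsequence, $(u(x_n))$ is weakly null (as $u$ is weakly continuous) and seminormalized. Likewise, using property $(R_q)$ of $Y$, after a further subsequence there is an operator $v: Y \to \ell_q$ with $(v(y_n))$ weakly null and seminormalized. Passing to a common subsequence, by the Bessaga–Pe\l czy\'nski selection principle I may assume $(u(x_n))$ is a basic sequence in $\ell_p$ equivalent to a block basis of the unit vector basis, hence equivalent to the $\ell_p$-basis, and similarly $(v(y_n))$ is equivalent to the $\ell_q$-basis. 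So there are isomorphic embeddings identifying $(u(x_n))$ with $(e_n)$ in $\ell_p$ and $(v(y_n))$ with $(e'_n)$ in $\ell_q$, up to composing with further operators; concretely I can post-compose $u$ with a bounded operator $\ell_p \to \ell_p$ sending the block basis $(u(x_n))$ to a subsequence of $(e_n)$, and similarly for $v$, so WLOG $u(x_n) = e_n$ and $v(y_n) = e'_n$ for all $n$ (after relabelling).

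Now consider the operator $u \otimes v : X \pten Y \to \ell_p \pten \ell_q$, which maps $x_n \otimes y_n$ to $e_n \otimes e'_n$. As recalled in the introduction (via \cite[Proposition~3.6]{avi-alt-8}), when $1/p + 1/q \ge 1$ the sequence $(e_n \otimes e'_n)_{n\in\N}$ is an $\ell_1$-sequence in $\ell_p \pten \ell_q$. Since an operator can only decrease $\ell_1$-type lower estimates trivially in the wrong direction, I cannot directly pull this back; instead I use the lower estimate the other way: $\|u\otimes v\| \cdot \big\| \sum a_n (x_n\otimes y_n)\big\| \ge \big\| \sum a_n (e_n \otimes e'_n)\big\| \ge c \sum |a_n|$ for all scalars $(a_n)$, where $c>0$ is the $\ell_1$-lower-constant of $(e_n\otimes e'_n)$. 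Together with the trivial upper bound $\big\|\sum a_n(x_n\otimes y_n)\big\| \le \big(\sup_n \|x_n\|\|y_n\|\big)\sum|a_n|$, this shows $(x_n\otimes y_n)$ is equivalent to the $\ell_1$-basis. Finally, to upgrade ``equivalent to the $\ell_1$-basis'' to ``$\ell_1$-sequence'' (i.e., also a basic sequence) I pass to a further subsequence using that any normalized sequence equivalent to the $\ell_1$-basis has a basic subsequence — or, more directly, observe $(x_n\otimes y_n)$ is seminormalized and weakly null (being the product of weakly null seminormalized sequences, by the estimate in Lemma~\ref{lem:compact-tensor-rwc}(ii)'s proof it is at least weakly convergent, and the $\ell_1$-lower estimate forbids norm convergence), hence by Bessaga–Pe\l czy\'nski has a basic subsequence, which is then the desired $\ell_1$-subsequence.

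The main obstacle I anticipate is the bookkeeping in the reduction to weakly null sequences: one must check carefully that the perturbation $x_n\otimes y_n \mapsto (x_n - x)\otimes(y_n - y)$ differs by a sequence that is harmless for $\ell_1$-extraction — specifically that $x\otimes y_n + x_n \otimes y$ is weakly convergent (immediate) but one needs it to not interfere with the $\ell_1$-lower estimate, which is handled by noting that a weakly convergent perturbation of an $\ell_1$-sequence, after passing to a subsequence, is still an $\ell_1$-sequence (because one can extract so that the perturbation terms become ``almost constant'' and subtract the constant, landing in a norm-null perturbation regime). The second delicate point is ensuring the operators $u$ and $v$ can be arranged so that $u(x_n) = e_n$ and $v(y_n) = e'_n$ exactly; this is routine via Bessaga–Pe\l czy\'nski blocking plus a diagonal operator, but must be stated cleanly.
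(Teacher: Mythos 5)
Your overall strategy --- transporting both sequences into $\ell_p$ and $\ell_q$, reducing to the model sequence $(e_n\otimes e'_n)$ in $\ell_p\pten\ell_q$, and pulling back the lower $\ell_1$-estimate through the bounded operator $u\otimes v$ --- is the same as the paper's. However, there is a genuine gap in the order of your reductions. You begin by replacing $(x_n)$ by $(x_n-x)$ and $(y_n)$ by $(y_n-y)$, where $x$ and $y$ are weak limits; but the hypotheses only give weakly Cauchy sequences, which in general have no weak limit in $X$ or $Y$ (their limits live in $X^{**}$ and $Y^{**}$). You announce a case split ``according to whether the sequences have weak limits'' and then only treat the convergent case; the other case is never addressed, and your subsequent appeal to property $(R_p)$ via the weakly null characterization in Remark~\ref{rem:Rp-charact}(ii) presupposes weak nullity in $X$. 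The fix is to reverse the order: use Remark~\ref{rem:Rp-charact}(iii) (a weakly precompact set that is not relatively norm compact is not coarse $p$-limited) to obtain $u:X\to\ell_p$ and $v:Y\to\ell_q$ \emph{first}, and only then subtract the weak limits of $(u(x_n))$ and $(v(y_n))$, which exist because $\ell_p$ and $\ell_q$ are weakly sequentially complete. This is precisely how the paper's proof proceeds.

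Second, your justification for absorbing the cross terms is not correct as stated. The perturbation $x\otimes y_n+x_n\otimes y-x\otimes y$ is weakly convergent, but it is \emph{not} a norm-small perturbation: its distance from its weak limit $x\otimes y$ equals $\|x\otimes(y_n-y)+(x_n-x)\otimes y\|$, which stays bounded away from $0$ when $(x_n)$ and $(y_n)$ have no norm convergent subsequences. Hence the standard small-perturbation stability of basic sequences does not apply, and no subsequence makes these terms ``almost constant'' in norm. The correct tool is Rosenthal's $\ell_1$-theorem: if $(c_k)$ is an $\ell_1$-sequence and $(d_k)$ is weakly convergent, then $(c_k+d_k)$ has no weakly Cauchy subsequence and therefore admits an $\ell_1$-subsequence; this is exactly the argument the paper applies to $h_k=z_{n_k}\otimes w_k+h'_k$. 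Two minor points: a sequence equivalent to the unit vector basis of $\ell_1$ is automatically basic, so your final ``upgrade'' step is unnecessary; and your parenthetical claim that $(x_n\otimes y_n)$ is weakly convergent because both factors are weakly null is false --- it would contradict the very conclusion of the theorem (consider $(e_n\otimes e'_n)$ itself).
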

\begin{proof}
The set $\{x_n:n\in \N\}$ is weakly precompact but fails to be norm relatively compact. 
Since $X$ has property~$(R_p)$, there is an operator $u:X \to \ell_p$ such that $\{u(x_n): n\in \N\}$
is not relatively norm compact in $\ell_p$. By passing to a subsequence, we can assume that
$(u(x_n))_{n\in \N}$ does not admit norm convergent subsequences.
Note that $(u(x_n))_{n\in \N}$ is weakly Cauchy and so weakly convergent to some $z\in\ell_p$ (the space $\ell_p$ 
is weakly sequentially complete). Define $z_n:=u(x_n)-z$ for all $n\in \N$.
Since the weakly null sequence $(z_n)_{n\in \N}$ does not admit norm null subsequences, by passing to a further subsequence
we can assume that $(z_{n})_{n\in \N}$ is a basic sequence equivalent to the usual basis of~$\ell_p$ and that
$\overline{{\rm span}}(\{z_n:n\in \N\})$ is complemented in~$\ell_p$ (see Proposition~\ref{pro:lp}). 
In the same way, since the set $\{y_{n}:n\in \N\}$ is weakly precompact but fails to be norm relatively compact,
property~$(R_q)$ of~$Y$ ensures the existence of an operator $v:Y \to \ell_q$,
a subsequence $(y_{n_k})_{k\in \N}$ and $w\in \ell_q$ such that the sequence $(w_k)_{k\in \N}$ 
defined by $w_k:=v(y_{n_k})-w$ for all $k\in \N$ is a basic sequence equivalent to the usual basis of~$\ell_q$ and that
$W:=\overline{{\rm span}}(\{w_k:k\in \N\})$ is complemented in~$\ell_q$.
 
Define $Z:=\overline{{\rm span}}(\{z_{n_k}:k\in \N\})$.
Then $(z_{n_k}\otimes w_k)_{k\in \N}$
is an $\ell_1$-sequence in~$Z\pten W$ (see, e.g., the proof of \cite[Proposition~3.6]{avi-alt-8}).
Let $\iota_Z: Z \to \ell_p$ and $\iota_W: W \to \ell_q$ be the inclusion operators.
Since $Z$ and $W$ are complemented in $\ell_p$ and~$\ell_q$, respectively,
the operator $\iota_Z\otimes \iota_W: Z \pten W \to \ell_p\pten \ell_q$ is an isomorphism onto a (complemented) subspace of~$\ell_p\pten \ell_q$
(see, e.g., \cite[Proposition~2.4]{rya}). Hence, $(z_{n_k}\otimes w_k)_{k\in \N}$
is also an $\ell_1$-sequence in~$\ell_p \pten \ell_q$. Since
$$
	h_k:=(u\otimes v)(x_{n_k}\otimes y_{n_k})=u(x_{n_k})\otimes v(y_{n_k})=z_{n_k}\otimes w_k + \underbrace{z \otimes w_k + z_{n_k}\otimes w + z\otimes w}_{=: h'_k}
$$
for all $k\in \N$ and the sequence $(h'_k)_{k\in \N}$ is weakly convergent (to~$z\otimes w$) in~$\ell_p\pten \ell_q$
(bear in mind that both $(z_{n_k})_{k\in \N}$ and $(w_k)_{k\in \N}$ are weakly null), 
we can apply Rosenthal's $\ell_1$-theorem (see, e.g., \cite[Theorem~10.2.1]{alb-kal}) to 
infer that $(h_k)_{k\in \N}$ admits an 
$\ell_1$-subsequence, say $(h_{k_j})_{j\in \N}$.
Since $u\otimes v$ is an operator, it is not difficult to prove that $(x_{n_{k_j}}\otimes y_{n_{k_j}})_{j\in \N}$
is an $\ell_1$-sequence in $X\pten Y$. This finishes the proof.
\end{proof}

Theorem \ref{theo:Rp0} provides an alternative proof of Theorem \ref{theo:Rp}, as follows.

\begin{proof}[Second proof of Theorem~\ref{theo:Rp}]
Let $W_1 \sub X$ and $W_2 \sub Y$ be sets such that $W_1\otimes W_2$ is weakly precompact in $X\pten Y$.
By contradiction, suppose that $W_1$ and $W_2$ are not relatively norm compact. Since both $W_1$ and $W_2$ 
are weakly precompact (see Lemma~\ref{lem:compact-tensor-rwc}(i)), there exist weakly Cauchy sequences 
$(x_n)_{n\in \N}$ in~$W_1$ and $(y_n)_{n\in \N}$ in~$W_2$ without norm convergent subsequences.  
By Theorem~\ref{theo:Rp0}, $(x_n\otimes y_n)_{n\in \N}$ admits an $\ell_1$-subsequence in $X\pten Y$, which contradicts
the weak precompactness of $W_1\otimes W_2$.
\end{proof}

Let us obtain another consequence of Theorem \ref{theo:Rp0} in the context of weakly null sequences in projective tensor products. 
Observe that the usual basis of $\ell_2$ shows that, in general, if $(x_n)_{n\in \N}$ and $(y_n)_{n\in \N}$ are weakly null sequences in~$X$ and~$Y$, respectively, 
the sequence $(x_n\otimes y_n)_{n\in \N}$ may fail to be weakly null in $X\pten Y$. To the best of our knowledge, the following question is open in complete generality. 

\begin{question}\label{question:Gonzalo}
Let $X$ and $Y$ be Banach spaces. Let $(x_n)_{n\in \N}$ and $(y_n)_{n\in \N}$ be weakly null sequences in~$X$ and~$Y$, respectively, such that
$(x_n\otimes y_n)_{n\in \N}$ is weakly convergent in~$X \pten Y$. Is $(x_n\otimes y_n)_{n\in \N}$ weakly null in~$X \pten Y$?
\end{question}

Theorem \ref{theo:Rp0} allows us to provide the following partial affirmative answer.

\begin{cor}\label{cor:tensor-weaklynull}
Let $X$ and $Y$ be Banach spaces such that $X$ has property~$(R_p)$ and  
$Y$ has property~$(R_q)$ for some $1< p,q<\infty$ satisfying $1/p+1/q\geq 1$.
Let $(x_n)_{n\in \N}$ and $(y_n)_{n\in \N}$ be weakly null sequences in~$X$ and~$Y$, respectively. Then 
$(x_n\otimes y_n)_{n\in \N}$ is weakly null if (and only if) it is weakly Cauchy in~$X \pten Y$.
\end{cor}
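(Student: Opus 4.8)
We only need to prove the nontrivial implication (the ``only if'' part is immediate, since a weakly null sequence is weakly Cauchy). So assume that $(x_n\otimes y_n)_{n\in\N}$ is weakly Cauchy in $X\pten Y$; the plan is to show it is weakly null. The starting point is the elementary remark that a weakly Cauchy sequence which has a weakly null subsequence must itself be weakly null: if $\langle\Psi,z_n\rangle$ converges for every $\Psi\in(X\pten Y)^*$ and a subsequence of $(z_n)_{n\in\N}$ converges weakly to $0$, then the whole sequence does. Consequently it suffices to exhibit one weakly null subsequence of $(x_n\otimes y_n)_{n\in\N}$.

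To this end I would first pass to a subsequence along which $(x_n)_{n\in\N}$ is either norm null or seminormalized; this is possible since $(x_n)_{n\in\N}$ is bounded (being weakly null) and, if $\|x_n\|\not\to 0$, a suitable subsequence is bounded away from $0$. If $(x_n)_{n\in\N}$ is norm null along this subsequence, then, using $\|x_n\otimes y_n\|_{X\pten Y}=\|x_n\|\,\|y_n\|$ and the boundedness of $(y_n)_{n\in\N}$, the sequence $(x_n\otimes y_n)_{n\in\N}$ is norm null, hence weakly null, along that subsequence, and we are done. Otherwise $(x_n)_{n\in\N}$ is seminormalized, and I would pass to a further subsequence along which $(y_n)_{n\in\N}$ is likewise either norm null --- in which case we conclude as before --- or seminormalized.

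There remains the case in which, after passing to a subsequence, both $(x_n)_{n\in\N}$ and $(y_n)_{n\in\N}$ are seminormalized weakly null sequences; in particular neither has a norm convergent subsequence, because such a subsequence would converge in norm to the weak limit $0$. Theorem~\ref{theo:Rp0} then shows that $(x_n\otimes y_n)_{n\in\N}$ has an $\ell_1$-subsequence in $X\pten Y$. But every subsequence of $(x_n\otimes y_n)_{n\in\N}$ is weakly Cauchy, whereas an $\ell_1$-sequence (like the unit vector basis of $\ell_1$) admits no weakly Cauchy subsequence. This contradiction rules out the last case, so one of the previous cases occurs and the proof is complete.

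The only real obstacle is this last case, and it is precisely there that the hypotheses on $X$ and $Y$ enter, via Theorem~\ref{theo:Rp0}; everything else is routine manipulation of subsequences together with the multiplicativity of the projective norm on elementary tensors.
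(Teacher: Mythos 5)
Your argument is correct and is essentially the paper's own proof: both apply Theorem~\ref{theo:Rp0} (you by contradiction, the paper in contrapositive form) to conclude that one of the sequences has a norm null subsequence, so that $(x_n\otimes y_n)_{n\in\N}$ has a weakly null subsequence and, being weakly Cauchy, is itself weakly null. Your extra case analysis (norm null versus seminormalized subsequences) just makes explicit the observation that a weakly null sequence with no norm convergent subsequence is exactly one that is not norm null along any subsequence.
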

\begin{proof}
Suppose that $(x_n\otimes y_n)_{n\in \N}$ is weakly Cauchy. By Theorem~\ref{theo:Rp0}, either $(x_n)_{n\in \N}$ or $(y_n)_{n\in \N}$ admits a norm null subsequence. Hence,
$(x_n\otimes y_n)_{n\in \N}$ admits a weakly null subsequence (by the proof of Lemma~\ref{lem:compact-tensor-rwc}(ii)).
It follows that $(x_n\otimes y_n)_{n\in \N}$ is weakly null.
\end{proof}

\subsection{Some applications}\label{subsection:applications}

In this subsection we combine Theorem~\ref{theo:Rp} with a deep result of Knaust and Odell~\cite{kna-ode}
(see Theorem~\ref{theo:KO} below) to get some results on multiplication operators due to Saksman and Tylli~\cite{sak-tyl-1} 
(see Corollaries~\ref{cor:subspace-lp-operator} and~\ref{cor:James-operator}). We need to introduce some additional terminology. 

\begin{defi}\label{defi:Tp}
Let $X$ be a Banach space and $1< p< \infty$. A sequence $(x_n)_{n\in \N}$ in~$X$ is said to have an 
{\em upper $p$-estimate} if there is a constant $c>0$ such that
$$
	\left(\sum_{n=1}^m |a_n|^p\right)^{1/p} \geq c \left\|\sum_{n=1}^m a_n x_n\right\|
$$
for every $m\in \N$ and for all $a_1,\dots,a_m \in \mathbb{R}$.
\end{defi}

\begin{defi}\label{defi:Sp}
Let $X$ be a Banach space and $1< p< \infty$. We say $X$ has {\em property~$(S_p)$} if every seminormalized weakly null sequence in~$X$
admits a subsequence having an upper $p$-estimate.
\end{defi}

The following result can be found in \cite[Corollary~2]{kna-ode}:

\begin{theo}[Knaust-Odell]\label{theo:KO}
Let $X$ be a Banach space such that $X$ has property~$(S_p)$ and $X^*$ has property~$(S_{p'})$, where
$1<p,p'<\infty$ satisfy $1/p+1/p'=1$. The following statements hold:
\begin{enumerate}
\item[(i)] If $X^*$ contains no subspace isomorphic to~$\ell_1$, then $X$ has property~$(P_p)$.
\item[(ii)] If $X$ contains no subspace isomorphic to~$\ell_1$, then $X^*$ has property~$(P_{p'})$. 
\end{enumerate}
\end{theo}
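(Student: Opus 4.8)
The plan is to establish the two parts by parallel arguments built on an elementary duality between upper estimates. Property $(S_p)$ produces an \emph{upper} $p$-estimate for a subsequence of the given weakly null sequence; property $(S_{p'})$ of the dual produces an \emph{upper} $p'$-estimate for an accompanying sequence of functionals; and the two fit together --- through H\"older's inequality applied with $b_n=|a_n|^{p-1}\operatorname{sgn}(a_n)$ --- to yield both the matching \emph{lower} $p$-estimate and a bounded projection. The no-$\ell_1$ hypotheses enter at a single place: to guarantee, via Rosenthal's $\ell_1$-theorem, that the auxiliary sequence constructed along the way is weakly precompact, so that property~$(S_{p'})$ --- in part~(i) --- or property~$(S_p)$ --- in part~(ii) --- can be applied to it.

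For part~(i), let $(x_n)_{n\in\N}$ be a seminormalized weakly null sequence in $X$. Using $(S_p)$ together with the Bessaga--Pe{\l}czy\'nski selection principle, pass to a subsequence which is basic and has an upper $p$-estimate, and let $(f_n)_{n\in\N}\sub X^*$ be Hahn--Banach extensions of its coordinate functionals, so that $\langle f_n,x_m\rangle=\delta_{nm}$ and $(f_n)_{n\in\N}$ is bounded and seminormalized. Since $X^*$ contains no copy of $\ell_1$, Rosenthal's theorem gives a weakly Cauchy subsequence of $(f_n)_{n\in\N}$, and replacing it by the sequence of consecutive differences $g_n:=f_{k_{2n}}-f_{k_{2n-1}}$ yields a seminormalized \emph{weakly null} sequence $(g_n)_{n\in\N}$ in $X^*$ still biorthogonal to a subsequence of $(x_n)_{n\in\N}$. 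Now apply $(S_{p'})$ of $X^*$ to $(g_n)_{n\in\N}$ and pass to a common subsequence so that $(g_n)_{n\in\N}$ has an upper $p'$-estimate while the vectors retain their upper $p$-estimate. Testing $\sum_n a_n x_n$ against $\sum_n b_n g_n$ with $b_n=|a_n|^{p-1}\operatorname{sgn}(a_n)$, the upper $p'$-estimate of $(g_n)_{n\in\N}$ delivers a lower $p$-estimate for $(x_n)_{n\in\N}$; hence this subsequence is equivalent to the unit vector basis of $\ell_p$. Finally, for $x\in X$ the scalars $\langle g_n,x\rangle$ lie in $\ell_p$ with $\ell_p$-norm $\lesssim \|x\|$ (again by duality with the upper $p'$-estimate), so $Px:=\sum_n\langle g_n,x\rangle x_n$ is a bounded linear projection of $X$ onto $\overline{{\rm span}}(\{x_n:n\in\N\})$. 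Thus $X$ has property~$(P_p)$.

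For part~(ii) I would run the symmetric argument, with the one genuinely new ingredient being that the ``biorthogonal functionals'' of a seminormalized basic sequence $(\vf_n)_{n\in\N}\sub X^*$ a priori live in $X^{**}$, whereas a projection on $X^*$ requires functionals from $X$. As $(\vf_n)_{n\in\N}$ is in particular weak$^*$ null, a gliding-hump construction produces vectors $x_n\in X$ with $\langle\vf_n,x_n\rangle$ bounded away from $0$ and $\sum_{m\ne n}|\langle\vf_m,x_n\rangle|$ as small as desired --- using, for each fixed $n$, that $\langle\vf_m,x_n\rangle\to 0$ as $m\to\infty$. The sequence $(x_n)_{n\in\N}$ is bounded in $X$; since $X$ contains no copy of $\ell_1$, it has a weakly Cauchy subsequence, and consecutive differences give a seminormalized weakly null sequence in $X$ to which $(S_p)$ applies. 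The same H\"older-type computation --- now with merely \emph{approximate} biorthogonality, so a routine perturbation argument is inserted --- shows that $(\vf_n)_{n\in\N}$ has a subsequence equivalent to the unit vector basis of $\ell_{p'}$, and $\vf\mapsto\sum_n\langle\vf,x_n\rangle\vf_n$ is a bounded, weak$^*$-to-weak$^*$ continuous projection on $X^*$. Thus $X^*$ has property~$(P_{p'})$.

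The step I expect to be the main obstacle is the bookkeeping: extracting a single subsequence along which \emph{all} the needed features hold simultaneously --- an upper $p$-estimate for the vectors, an upper $p'$-estimate for the functionals, weak nullity precisely where it is used, and, in part~(ii), an approximate biorthogonality robust under the passage to differences --- and then checking that the perturbation in part~(ii) does not damage the quantitative estimates controlling the projection. The conceptual core, by contrast, is light: it is nothing more than the duality between upper $p$- and upper $p'$-estimates, with Rosenthal's theorem supplying weak precompactness of the auxiliary functionals exactly when the no-$\ell_1$ assumption is in force.
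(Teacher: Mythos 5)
The paper does not prove this theorem at all: it is imported from Knaust--Odell \cite[Corollary~2]{kna-ode} and used as a black box, so there is no internal proof to compare yours with. Judged on its own terms, your duality scheme does prove the statement as recorded here, and part~(i) as you present it is essentially complete: biorthogonal functionals, Rosenthal plus consecutive differences to make them weakly null while keeping biorthogonality with the even-indexed vectors, $(S_{p'})$, and the pairing with $b_n=|a_n|^{p-1}{\rm sgn}(a_n)$ (note $(p-1)p'=p$) giving both the lower $p$-estimate and the bound $\bigl(\sum_n|\langle g_n,x\rangle|^p\bigr)^{1/p}\lesssim\|x\|$ that makes $P$ bounded. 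One caveat: Knaust--Odell's actual corollary has uniform constants over all normalized weakly null sequences, and that uniformity is where their deep Ramsey-type machinery lives; your sequence-by-sequence argument does not recover it, but it is not needed for property~$(P_p)$ as defined in this paper.

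Two spots in part~(ii) are genuinely under-justified as written. First, the gliding hump: the reason you give ($\langle\vf_m,x_n\rangle\to0$ as $m\to\infty$ for fixed $n$) only controls the terms with $m>n$ in $\sum_{m\neq n}|\langle\vf_m,x_n\rangle|$; to kill the terms with $m<n$ you must choose $x_n$ inside the pre-annihilator in $X$ of ${\rm span}(\{\vf_{n_1},\dots,\vf_{n_{j-1}}\})$, which is possible with $\langle\vf_{n_j},x_{n_j}\rangle$ bounded below because $(\vf_n)$ is a seminormalized basic sequence (so ${\rm dist}(\vf_{n_j},{\rm span}(\{\vf_{n_1},\dots,\vf_{n_{j-1}}\}))$ is bounded below, and the restriction to the pre-annihilator of a finite-dimensional, hence weak$^*$-closed, subspace has norm equal to that distance). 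Second, the projection cannot be $\vf\mapsto\sum_n\langle\vf,x_n\rangle\vf_n$ with the raw vectors $x_n$: only the weakly null differences $u_i$ carry an upper $p$-estimate, and that estimate is exactly what puts $(\langle\vf,u_i\rangle)_i$ in $\ell_{p'}$. Moreover, since biorthogonality is only approximate and $\langle\psi_i,u_i\rangle$ is merely bounded away from $0$ rather than equal to $1$, the map $Q\vf=\sum_i\langle\vf,u_i\rangle\psi_i$ restricted to $\overline{{\rm span}}(\{\psi_i\})$ is a diagonal operator plus a small perturbation, hence an isomorphism onto that span but not the identity; the projection is $(Q|_{\overline{{\rm span}}})^{-1}\circ Q$. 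Both repairs are standard and you partially flag them, but they are the only places where your sketch, taken literally, does not yet close.
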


\begin{cor}\label{cor:subspace-lp-KO}
Let $X$ be a subspace of~$\ell_p$, where $1<p<\infty$. Then:
\begin{enumerate} 
\item[(i)] $X$ has property~$(P_p)$.
\item[(ii)] $X^*$ has property~$(P_{p'})$, where $1<p'<\infty$ satisfies $1/p+1/p'=1$.
\item[(iii)] The pair $(X,X^*)$ has property~(AW).
\end{enumerate}
\end{cor}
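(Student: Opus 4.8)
The plan is to obtain~(i) and~(ii) by applying the Knaust--Odell theorem (Theorem~\ref{theo:KO}) to the space~$X$, and then to deduce~(iii) from~(i), (ii), Remark~\ref{rem:Rp-Tp} and Theorem~\ref{theo:Rp}.

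First I would check that the hypotheses of Theorem~\ref{theo:KO} hold for~$X$. Since $1<p<\infty$, the space~$\ell_p$ is reflexive, hence so is its subspace~$X$ and so is~$X^*$; in particular, neither $X$ nor~$X^*$ contains a subspace isomorphic to~$\ell_1$. Moreover, $X$ has property~$(S_p)$: a seminormalized weakly null sequence in~$X$ is also a seminormalized weakly null sequence in~$\ell_p$ (the weak topology of~$X$ is the restriction of that of~$\ell_p$), and since $\ell_p$ has property~$(P_p)$ by Proposition~\ref{pro:lp}, such a sequence has a subsequence equivalent to the usual basis of~$\ell_p$, which certainly has an upper $p$-estimate. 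It remains to see that $X^*$ has property~$(S_{p'})$, and this is the one step requiring an argument. Identify $X^*$ with the quotient $\ell_{p'}/X^\perp$ through the adjoint $\iota^*\colon (\ell_p)^*=\ell_{p'}\to X^*$ of the isometric inclusion $\iota\colon X\to\ell_p$ (a metric surjection, by Hahn--Banach). Given a seminormalized weakly null sequence $(f_n)_{n\in\N}$ in~$X^*$, pick $g_n\in\ell_{p'}$ with $\iota^*(g_n)=f_n$ and $\|g_n\|=\|f_n\|$; passing to a subsequence, by reflexivity of~$\ell_{p'}$ we may assume $(g_n)_{n\in\N}$ converges weakly to some~$g$, necessarily with $\iota^*(g)=0$, so that after replacing $g_n$ by $g_n-g$ the sequence $(g_n)_{n\in\N}$ is weakly null, still bounded, and still seminormalized since $\|f_n\|\le\|g_n\|$. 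As $\ell_{p'}$ has property~$(P_{p'})$, hence~$(S_{p'})$, some subsequence $(g_{n_k})_{k\in\N}$ has an upper $p'$-estimate with constant~$c>0$; then, since $\|\iota^*\|\le 1$,
$$
\left\|\sum_{k} a_k f_{n_k}\right\| = \left\|\iota^*\left(\sum_{k} a_k g_{n_k}\right)\right\| \le \left\|\sum_{k} a_k g_{n_k}\right\| \le \frac{1}{c}\left(\sum_k |a_k|^{p'}\right)^{1/p'},
$$
so $(f_{n_k})_{k\in\N}$ has an upper $p'$-estimate. Thus $X^*$ has property~$(S_{p'})$.

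With these facts in hand, Theorem~\ref{theo:KO}(i) applied to~$X$ (using that $X^*$ contains no copy of~$\ell_1$) yields~(i), and Theorem~\ref{theo:KO}(ii) applied to~$X$ (using that $X$ contains no copy of~$\ell_1$) yields~(ii). For~(iii), Remark~\ref{rem:Rp-Tp} turns property~$(P_p)$ of~$X$ into property~$(R_p)$ of~$X$ and property~$(P_{p'})$ of~$X^*$ into property~$(R_{p'})$ of~$X^*$; since $1/p+1/p'=1\ge 1$, Theorem~\ref{theo:Rp} applied to the pair $(X,X^*)$ gives property~(AW). The only mildly delicate point is the lifting argument establishing~$(S_{p'})$ for~$X^*$, and even that is routine once one uses the reflexivity of~$\ell_{p'}$ together with Proposition~\ref{pro:lp}.
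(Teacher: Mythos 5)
Your proof is correct and follows essentially the same route as the paper: establish property~$(S_p)$ for~$X$ by inheritance from~$\ell_p$ and property~$(S_{p'})$ for~$X^*$ via the quotient $\ell_{p'}/X^\perp$, then invoke Theorem~\ref{theo:KO} for (i)--(ii) and combine Remark~\ref{rem:Rp-Tp} with Theorem~\ref{theo:Rp} for (iii). The only difference is that you spell out the lifting argument behind the quotient stability of~$(S_{p'})$, which the paper merely asserts in one line; your version of that step is correct.
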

\begin{proof}
Note that $\ell_{p}$ (resp., $\ell_{p'}$) has property~$(P_p)$ (resp., $(P_{p'})$), see
Proposition~\ref{pro:lp}, which in turn implies property~$(S_p)$ (resp., $(S_{p'})$). 
Clearly, property~$(S_p)$ is inherited by subspaces, so $X$ has property~$(S_p)$.
Since quotients of reflexive Banach spaces having property~$(S_{p'})$ also have property~$(S_{p'})$, it follows  
that $X^*$ has property~$(S_{p'})$. Statements~(i) and~(ii) now follow at once from Theorem~\ref{theo:KO}.
Statement~(iii) is a consequence of Theorem~\ref{theo:Rp} and Remark~\ref{rem:Rp-Tp}.
\end{proof}

\begin{cor}\label{cor:James-KO}
Let $J$ be the James space. Then:
\begin{enumerate}
\item[(i)] $J$ and $J^*$ have property $(P_2)$.
\item[(ii)] The pair $(J,J^*)$ has property~(AW).
\end{enumerate}
\end{cor}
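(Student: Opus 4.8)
The plan is to follow the pattern of the proof of Corollary~\ref{cor:subspace-lp-KO}, replacing the embedding of a subspace of~$\ell_p$ by a direct application of the Knaust--Odell theorem (Theorem~\ref{theo:KO}) with $p=p'=2$, and then feeding the outcome into Theorem~\ref{theo:Rp} to obtain~(ii).

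First I would record the structural facts about the James space~$J$ that are needed. Since $J$ is quasi-reflexive of order one and separable, $J^{**}$ is separable; as separability of a dual passes to the underlying space, $J^*$ is separable too. Hence neither $J$ (whose dual $J^*$ is separable) nor $J^*$ (whose dual $J^{**}$ is separable) contains a subspace isomorphic to~$\ell_1$, because any Banach space containing~$\ell_1$ has nonseparable dual (the restriction map onto $\ell_1^*=\ell_\infty$ is onto by Hahn--Banach). Next comes the essential input: every seminormalized weakly null sequence in~$J$ admits a subsequence equivalent to the usual basis of~$\ell_2$, and the same holds in~$J^*$; in particular both $J$ and~$J^*$ have property~$(S_2)$. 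This is where non-trivial information about~$J$ enters and is the main obstacle to a self-contained write-up; it can be extracted from the explicit description of the norms of~$J$ and~$J^*$ and the behaviour of block sequences of their natural bases (classical; see, e.g., \cite{sak-tyl-1} and the references therein).

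Granting this, Theorem~\ref{theo:KO} applies to $X=J$ with $p=p'=2$ (note $1/2+1/2=1$): since $J^*$ contains no subspace isomorphic to~$\ell_1$, part~(i) of that theorem yields that $J$ has property~$(P_2)$, and since $J$ contains no subspace isomorphic to~$\ell_1$, part~(ii) yields that $J^*$ has property~$(P_2)$. This establishes~(i).

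For~(ii), Remark~\ref{rem:Rp-Tp} shows that property~$(P_2)$ implies property~$(R_2)$, so both $J$ and~$J^*$ have property~$(R_2)$; since $1/2+1/2=1\geq 1$, Theorem~\ref{theo:Rp} gives that the pair $(J,J^*)$ has property~(AW).
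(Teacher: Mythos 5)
Your argument is correct and follows the paper's proof essentially verbatim: the paper likewise feeds property $(S_2)$ of $J$ and $J^*$ into Theorem~\ref{theo:KO} with $p=p'=2$ (using, as you note, that $J$ and $J^*$ have separable duals and hence contain no copy of $\ell_1$) and then deduces (ii) from Remark~\ref{rem:Rp-Tp} and Theorem~\ref{theo:Rp}. The one input you flagged as the obstacle --- that $J$ and $J^*$ have property $(S_2)$ --- is exactly what the paper imports as known, citing \cite{ame-ito} for $J$ and \cite[Proposition~3.3]{gon:95} for $J^*$.
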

\begin{proof}
It is known that both $J$ and $J^*$ have property~$(S_2)$, see \cite{ame-ito} and \cite[Proposition~3.3]{gon:95}, respectively.
Now, we can argue as in the proof of Corollary~\ref{cor:subspace-lp-KO}. 
\end{proof}

As an application we get the next result (see \cite[Proposition~3.2]{sak-tyl-1}):

\begin{cor}[Saksman-Tylli]\label{cor:subspace-lp-operator}
Let $X$ be a subspace of~$\ell_p$, where $1<p<\infty$, and let $R,S\in \mathcal{L}(X)$.
If the operator $\Phi_{R,S}: \mathcal{L}(X)\to \mathcal{L}(X)$ is weakly compact, then either $R$ or $S$ is compact.
\end{cor}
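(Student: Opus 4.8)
The plan is to reduce the statement to property~(AW) for the pair $(X,X^*)$ via the chain of equivalences established in Section~\ref{section:multiplication}. First I would observe that $X$ is reflexive, being a closed subspace of the reflexive space $\ell_p$; hence $X^{**}=X$ and $R^{**}=R$ under the canonical identifications, so the operator $\Phi_{R,S}\colon \mathcal{L}(X)\to\mathcal{L}(X)$ is exactly the operator $\Phi_{R^{**},S}$ of Proposition~\ref{pro:wedge-vs-tensor} with $X_1=Y_1=X$ (and $S,R$ playing the roles indicated there). This bookkeeping is the only point that needs a moment's care.

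Next I would invoke Proposition~\ref{pro:wedge-vs-tensor}: since $\Phi_{R^{**},S}=(S\otimes R^{*})^{*}$, Gantmacher's theorem gives that $\Phi_{R,S}$ is weakly compact precisely when $S\otimes R^{*}\colon X\pten X^{*}\to X\pten X^{*}$ is weakly compact. Then Lemma~\ref{lem:tensor-operator-rwc} translates the latter into the assertion that $S(B_X)\otimes R^{*}(B_{X^*})$ is relatively weakly compact in $X\pten X^{*}$; in particular, this set is weakly precompact.

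Now I would apply Corollary~\ref{cor:subspace-lp-KO}(iii), which asserts that the pair $(X,X^{*})$ has property~(AW). Consequently either $S(B_X)$ or $R^{*}(B_{X^*})$ is relatively norm compact, i.e.\ either $S$ is compact or $R^{*}$ is compact; in the second case Schauder's theorem yields that $R$ is compact, and the proof is complete.

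I do not expect a genuine obstacle here: the whole difficulty has been absorbed into Corollary~\ref{cor:subspace-lp-KO} (which rests on the Knaust--Odell dichotomy, Theorem~\ref{theo:KO}, together with Theorem~\ref{theo:Rp}) and into the identification of $\Phi_{R,S}$ with $(S\otimes R^{*})^{*}$. The remaining ingredients are the elementary implication ``relatively weakly compact $\Rightarrow$ weakly precompact'', so that property~(AW) applies, and the reflexivity of~$X$, so that no distinction between $\Phi_{R,S}$ and $\Phi_{R^{**},S}$ is needed.
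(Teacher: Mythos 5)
Your proposal is correct and follows essentially the same route as the paper's proof: reduce via Proposition~\ref{pro:wedge-vs-tensor} and Lemma~\ref{lem:tensor-operator-rwc} to the relative weak compactness of $S(B_X)\otimes R^*(B_{X^*})$, then apply property~(AW) from Corollary~\ref{cor:subspace-lp-KO}(iii) and Schauder's theorem. Your explicit remarks on the reflexivity of~$X$ (so that $\Phi_{R,S}=\Phi_{R^{**},S}$) and on ``relatively weakly compact $\Rightarrow$ weakly precompact'' are just the bookkeeping the paper leaves implicit.
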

\begin{proof}
The weak compactness of~$\Phi_{R,S}$ is equivalent to the weak compactness of $S \otimes R^* \in \mathcal{L}(X\pten X^*)$ (see Proposition~\ref{pro:wedge-vs-tensor}), 
which in turn is equivalent to the relative weak compactness of $S(B_X)\otimes R^*(B_{X^*})$ in $X\pten X^*$
(see Lemma~\ref{lem:tensor-operator-rwc}). Since the pair $(X,X^*)$ has property~(AW) (see Corollary~\ref{cor:subspace-lp-KO}(iii)), 
either $S(B_X)$ or $R^*(B_{X^*})$ is relatively norm compact, that is, either $S$ or $R^*$ is a compact operator. In the second case,
Schauder's theorem applies to conclude that $R$ is compact as well.  
\end{proof}

Finally, the same argument applies to the following (see \cite[Proposition~3.8]{sak-tyl-1}):

\begin{cor}[Saksman-Tylli]\label{cor:James-operator}
Let $J$ be the James space and let $R,S\in \mathcal{L}(J)$.
If the operator $\Phi_{R,S}: \mathcal{L}(J)\to \mathcal{L}(J)$ is weakly compact, then either $R$ or $S$ is compact.
\end{cor}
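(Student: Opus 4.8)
The plan is to run the proof of Corollary~\ref{cor:subspace-lp-operator} with Corollary~\ref{cor:James-KO}(ii) in place of Corollary~\ref{cor:subspace-lp-KO}(iii). Suppose we have shown that $S\otimes R^*\colon J\pten J^*\to J\pten J^*$ is weakly compact. Then Lemma~\ref{lem:tensor-operator-rwc} gives that $S(B_J)\otimes R^*(B_{J^*})$ is relatively weakly compact, hence weakly precompact, in $J\pten J^*$; since the pair $(J,J^*)$ has property~(AW) by Corollary~\ref{cor:James-KO}(ii), either $S(B_J)$ or $R^*(B_{J^*})$ is relatively norm compact, i.e. either $S$ or $R^*$ is compact, and in the latter case Schauder's theorem yields that $R$ is compact. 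So everything reduces to the implication ``$\Phi_{R,S}$ weakly compact $\Rightarrow$ $S\otimes R^*$ weakly compact''.

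This implication is the only place where the James case genuinely differs from the reflexive situation of Corollary~\ref{cor:subspace-lp-operator}, where $J$ would be a reflexive space, $\Phi_{R^{**},S}$ would coincide with $\Phi_{R,S}$, and Proposition~\ref{pro:wedge-vs-tensor} together with Gantmacher's theorem would close the argument immediately. Since $J^{**}\neq J$, I would instead show directly that $\Phi_{R^{**},S}\colon\mathcal{L}(J,J^{**})\to\mathcal{L}(J,J^{**})$ is weakly compact; by Proposition~\ref{pro:wedge-vs-tensor}(i) this operator equals $(S\otimes R^*)^*$, so $S\otimes R^*$ is weakly compact by Gantmacher's theorem. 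We may assume $R,S\neq0$ (otherwise the conclusion is trivial), and then $R$ and $S$ are weakly compact --- the easy implication recalled in the Introduction --- so $S^*$ is weakly compact as well.

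To prove that $\Phi_{R^{**},S}$ is weakly compact, I would use that $J$ is quasi-reflexive of order one: write $J^{**}=J\oplus\mathbb{R}e$ with $e\in J^{**}\setminus J$, let $P\colon J^{**}\to J^{**}$ be the bounded projection with range $J$ and kernel $\mathbb{R}e$, and fix $\phi\in J^{***}$ with $(I-P)(y^{**})=\langle\phi,y^{**}\rangle\,e$ for all $y^{**}\in J^{**}$ (in particular $\phi$ vanishes on $J$). Using that $R^{**}$ restricts to $R$ on $J\subseteq J^{**}$, a short computation gives, for every $\Psi\in\mathcal{L}(J,J^{**})$,
$$
\Phi_{R^{**},S}(\Psi)=\Phi_{R,S}(P\Psi)+u_\Psi ,
$$
where $P\Psi\in\mathcal{L}(J)$ and $u_\Psi\in\mathcal{L}(J,J^{**})$ is the operator of rank at most one defined by $u_\Psi(x)=\langle S^*\Psi^*\phi,x\rangle\,R^{**}e$. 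As $\Psi$ runs over $B_{\mathcal{L}(J,J^{**})}$, the operators $P\Psi$ stay in $\|P\|\,B_{\mathcal{L}(J)}$, so $\{\Phi_{R,S}(P\Psi):\|\Psi\|\le1\}$ is relatively weakly compact in $\mathcal{L}(J)$ and hence in $\mathcal{L}(J,J^{**})$; and the functionals $S^*\Psi^*\phi$ all lie in $S^*(\|\phi\|\,B_{J^*})$, which is relatively weakly compact because $S^*$ is, so $\{u_\Psi:\|\Psi\|\le1\}$ --- being the image of that set under the bounded linear map $g\mapsto\big(x\mapsto\langle g,x\rangle\,R^{**}e\big)$ from $J^*$ into $\mathcal{L}(J,J^{**})$ --- is relatively weakly compact too. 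Since a sum of two relatively weakly compact sets is relatively weakly compact, $\Phi_{R^{**},S}(B_{\mathcal{L}(J,J^{**})})$ is relatively weakly compact, i.e. $\Phi_{R^{**},S}$ is weakly compact.

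I expect the main obstacle to be exactly this bridging step --- producing the decomposition above and checking that each summand is weakly compact --- where the quasi-reflexivity of $J$ (the finite-dimensionality of $J^{**}/J$) is what keeps the correction term $u_\Psi$ under control; in the reflexive setting of Corollary~\ref{cor:subspace-lp-operator} this term is absent and the equivalence is immediate, after which the remaining steps are routine.
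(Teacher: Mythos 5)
Your proposal is correct, and it is worth comparing with what the paper actually does: the paper's entire proof of this corollary is the sentence ``the same argument applies'', referring to the proof of Corollary~\ref{cor:subspace-lp-operator}. You have correctly isolated the one point where that argument does not transfer verbatim: Proposition~\ref{pro:wedge-vs-tensor}(ii) gives the equivalence between weak compactness of $\Phi_{R^{**},S}$ and of $S\otimes R^*$, and only the \emph{one-way} implication down to $\Phi_{R,S}$; for a reflexive $X$ these two multiplication operators coincide, but for the James space $\mathcal{L}(J,J^{**})\neq\mathcal{L}(J)$, so the implication ``$\Phi_{R,S}$ weakly compact $\Rightarrow S\otimes R^*$ weakly compact'' needs a separate argument. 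Your bridging step is sound: writing $J^{**}=J\oplus\mathbb{R}e$ and $\Psi=P\Psi+(I-P)\Psi$, the identity $R^{**}\circ\kappa_J=\kappa_J\circ R$ gives $\Phi_{R^{**},S}(\Psi)=\Phi_{R,S}(P\Psi)+u_\Psi$ with $u_\Psi(x)=\langle S^*\Psi^*\phi,x\rangle R^{**}e$; the first summands range over $\|P\|\,\Phi_{R,S}(B_{\mathcal{L}(J)})$, which is relatively weakly compact by hypothesis (and remains so in the superspace $\mathcal{L}(J,J^{**})$), while the second summands are the image of the relatively weakly compact set $S^*(\|\phi\|B_{J^*})$ under a bounded linear map (here you correctly invoke the easy fact from the Introduction that $R,S\neq 0$ and $\Phi_{R,S}$ weakly compact force $S$, hence $S^*$, to be weakly compact), and the Minkowski sum of two relatively weakly compact sets is relatively weakly compact. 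From there Gantmacher's theorem, Lemma~\ref{lem:tensor-operator-rwc} and property~(AW) of $(J,J^*)$ from Corollary~\ref{cor:James-KO}(ii) finish the proof exactly as in the paper. In short: same endgame as the paper, plus a correct and genuinely needed quasi-reflexivity argument that the paper leaves implicit; if anything, your write-up is more complete than the published one.
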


\section{More on $\ell_1$-sequences in projective tensor products}\label{section:l1}

\subsection{Basic tensors  and unconditional bases}\label{subsection:repetitive}

If a sequence in a Banach space fails to be weakly null, then it admits a subsequence which is an {\em $\ell_1^+$-sequence}, in the following sense:

\begin{defi}\label{defi:l1plus}
Let $X$ be a Banach space. A sequence $(x_n)_{n\in \N}$ in~$X$
is called an {\em $\ell_1^+$-sequence} if the following equivalent statements hold:
\begin{enumerate}
\item[(i)] $0\not\in \overline{{\rm conv}}(\{x_n:n\in \N\})$.
\item[(ii)] There is $x^*\in X^*$ such that $x^*(x_n)\geq 1$ for every $n\in \Nat$.
\item[(iii)] There is a constant $C>0$ such that 
$$
	\left\|\sum_{n=1}^N a_n x_n\right\| \geq C \sum_{n=1}^N a_n
$$
for all $N\in \Nat$ and for all non-negative real numbers $a_1,\dots,a_N$.
\end{enumerate}
\end{defi}

It is not difficult to check that a bounded unconditional basic sequence is an $\ell_1$-sequence if and only if it is an $\ell_1^+$-sequence. 
In the same spirit, we have:  

\begin{lem}\label{lem:lplus}
Let $X$ and $Y$ be Banach spaces such that $X$ has an unconditional basis. Let~$(x_n)_{n\in \N}$ be a bounded unconditional basis of~$X$
and let $(y_n)_{n\in \N}$ be a bounded sequence in~$Y$. 
Then the sequence $(x_n\otimes y_n)_{n \in \N}$ in $X\pten Y$ is an $\ell_1$-sequence if and only if it is an $\ell_1^+$-sequence.
\end{lem}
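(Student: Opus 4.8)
The plan is to prove the non-trivial implication: if $(x_n \otimes y_n)_{n\in\N}$ is an $\ell_1^+$-sequence in $X \pten Y$, then it is already an $\ell_1$-sequence; the converse is immediate since an $\ell_1$-sequence is always an $\ell_1^+$-sequence. First I would fix an unconditional basis constant $K$ for the basis $(x_n)_{n\in\N}$ of $X$, together with the coordinate functionals $(x_n^*)_{n\in\N}$, and a bound $M$ for $\|x_n\|$ and $\|y_n\|$. By Definition~\ref{defi:l1plus}(ii) applied in $X \pten Y$, there is a norm-one bilinear functional $S \in \mathcal{B}(X,Y) = (X \pten Y)^*$ with $S(x_n, y_n) \geq 1$ for all $n$. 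The goal is to produce a constant $c > 0$ such that $\big\|\sum_{n} a_n\, x_n \otimes y_n\big\| \geq c \sum_n |a_n|$ for all finitely supported scalar sequences $(a_n)$.

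The key idea is to exploit unconditionality to reduce arbitrary signs to the positive case. Given scalars $a_1,\dots,a_N$, write $z := \sum_{n=1}^N a_n\, x_n \otimes y_n$. Using that $(x_n)$ is $K$-unconditional, the diagonal operator $D_\epsilon \colon X \to X$ sending $x_n \mapsto \epsilon_n x_n$ has norm at most $K$ for any choice of signs $\epsilon = (\epsilon_n)$; hence $D_\epsilon \otimes \mathrm{id}_Y \colon X \pten Y \to X \pten Y$ has norm at most $K$ by the metric mapping property of $\pten$ (see \cite[Proposition~2.3]{rya}). Therefore $\|z\| \geq \frac{1}{K}\big\|\sum_{n=1}^N \epsilon_n a_n\, x_n \otimes y_n\big\|$ for every choice of signs. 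Averaging over signs (or simply choosing $\epsilon_n = \operatorname{sign}(a_n)$) lets me assume the coefficients are all $\geq 0$: it suffices to bound $\big\|\sum_{n=1}^N a_n\, x_n \otimes y_n\big\|$ from below by $c' \sum_{n=1}^N a_n$ for non-negative $a_n$, and then the general bound follows with $c = c'/K$.

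With $a_n \geq 0$, I would evaluate the functional $S$ on $z = \sum_{n=1}^N a_n\, x_n \otimes y_n$. Naively $S(z) = \sum_n a_n S(x_n,y_n) \geq \sum_n a_n$, which would finish the proof immediately — but this is too good, and the subtlety is that the identity $S(z) = \sum_n a_n S(x_n,y_n)$ is genuinely valid (the action of $(X\pten Y)^*$ on a finite tensor is exactly the bilinear pairing), so in fact $\|z\| \geq |S(z)| = \sum_{n=1}^N a_n S(x_n, y_n) \geq \sum_{n=1}^N a_n$, giving $c' = 1$. Thus the proof really reduces to: (1) the norm-reduction to non-negative coefficients via the unconditional diagonal operators tensored with the identity, and (2) the trivial linearity of the pairing against $S$. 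The only point requiring a little care — and the step I expect to be the main obstacle to state cleanly — is justifying the norm estimate $\|D_\epsilon \otimes \mathrm{id}_Y\| \leq \|D_\epsilon\|$ and hence the lower bound $\|z\| \ge \frac1K \|\sum \epsilon_n a_n x_n\otimes y_n\|$; this is exactly the functoriality of the projective tensor norm and I would cite \cite[Proposition~2.3]{rya} for it. Putting the pieces together yields the constant $c = 1/K$ in Definition~\ref{defi:l1plus}(iii), so $(x_n \otimes y_n)_{n\in\N}$ is an $\ell_1^+$-sequence that is moreover unconditional (being a block-type sequence built from the unconditional basis), hence an $\ell_1$-sequence by the remark preceding the lemma.
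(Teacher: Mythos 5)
Your proposal is correct and is essentially the paper's proof in dual form: the paper absorbs the signs into the functional (defining $G(x)=\sum_{n}\operatorname{sign}(\lambda_n)x_n^*(x)T(x_n)$ and bounding $\|G\|\le K_u\|T\|$ by unconditionality), while you absorb them into the tensor via $D_\epsilon\otimes\mathrm{id}_Y$ and the metric mapping property, and the two computations coincide since $\langle G,z\rangle=\langle T,(D_\epsilon\otimes\mathrm{id}_Y)(z)\rangle$. The only slip is the simultaneous normalization ``$\|S\|=1$ and $S(x_n,y_n)\ge 1$,'' which Definition~\ref{defi:l1plus}(ii) does not guarantee; taking $S$ with $S(x_n,y_n)\ge 1$ and dividing by $\|S\|$ yields the constant $1/(K\|S\|)$ instead of $1/K$, which is all that is needed.
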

\begin{proof}
Take an operator $T:X\to Y^*$ and $\epsilon>0$ such that $T(x_{n})(y_{n}) \geq \varepsilon$
for all $n\in \N$. Fix $N\in \N$ and $\lambda_1,\dots,\lambda_N \in \mathbb R$. Define an operator $G:X\to Y^*$ by 
$$
	G(x):=\sum_{n=1}^N {\rm sign}(\lambda_n) x_{n}^*(x) T(x_{n})
	\quad
 	\text{for all $x\in X$},
$$
where $(x_n^*)_{n\in \N}$ is the sequence in~$X^*$ of biorthogonal functionals associated to the basis $(x_n)_{n\in \N}$. 
Observe that for every $x \in X$ we have
\[
\begin{split}\Vert G(x)\Vert=\left\| \sum_{n=1}^N {\rm sign}(\lambda_n)x_{n}^*(x) T(x_{n})\right\|& =\left\| T\left(\sum_{n=1}^N {\rm sign}(\lambda_n)x_{n}^*(x) x_{n}\right)\right\|\\
& \leq \Vert T\Vert \left\| \sum_{n=1}^N {\rm sign}(\lambda_n)x_{n}^*(x) x_{n}\right\|\\
& \leq \Vert T\Vert K_u \Vert x\Vert,
\end{split}
\]
where $K_u$ stands for the unconditional basis constant of $(x_n)_{n\in \N}$. Hence,
we have $\|G\|\leq \|T\|K_u$ and therefore
\[
\begin{split}
\left\| \sum_{n=1}^N \lambda_n x_{n}\otimes y_{n}\right\|& \geq \frac{1}{\Vert T\Vert K_u}\sum_{n=1}^N \lambda_n G(x_{n})(y_{n})\\
& =\frac{1}{\Vert T\Vert K_u}\sum_{n=1}^N \lambda_n {\rm sign}(\lambda_n) T(x_{n})(y_{n})\\
& \geq \frac{\varepsilon}{\Vert T\Vert K_u}\sum_{n=1}^\infty \vert\lambda_n\vert.
\end{split}
\]
This shows that the (bounded) sequence $(x_{n}\otimes y_{n})_{n\in \N}$ is an $\ell_1$-sequence.
\end{proof}

The following result provides another partial affirmative answer to Question~\ref{question:Gonzalo}. The additional 
assumption on one of the Banach spaces is weaker than property~$(P_p)$ (see Definition~\ref{defi:Tp-plus})
and holds for $c_0$, all $\ell_p$ spaces with $1\leq p<\infty$ (see, e.g., \cite[Proposition~2.1.3]{alb-kal})
and all $L_p[0,1]$ spaces with $2<p<\infty$, by a classical result of Kadec and Pe{\l}czy\'nski (see \cite{kad-pel62}, Theorem 2 and Corollaries 1 and~4).

\begin{theo}\label{theo:repetitive}
Let $X$ and $Y$ be Banach spaces. Suppose that every seminormalized weakly null sequence in~$X$ 
admits an unconditional basic subsequence whose closed linear span is complemented in~$X$.
Let $(x_n)_{n\in \N}$ be a weakly null sequence in~$X$ and let $(y_n)_{n\in \N}$ be a bounded sequence in~$Y$. 
If the sequence $(x_n\otimes y_n)_{n\in \N}$ is not weakly null in $X\pten Y$, then it admits an $\ell_1$-subsequence.
\end{theo}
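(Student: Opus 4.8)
The plan is to reduce everything to Lemma~\ref{lem:lplus} after extracting a suitable subsequence. First I would note that $(x_n)_{n\in\N}$ is bounded, being weakly convergent, and hence so is $(x_n\otimes y_n)_{n\in\N}$, with $\|x_n\otimes y_n\|=\|x_n\|\,\|y_n\|$. Since $(x_n\otimes y_n)_{n\in\N}$ fails to be weakly null, after passing to a subsequence (which we do not relabel) it is an $\ell_1^+$-sequence in $X\pten Y$ (recall the observation preceding Definition~\ref{defi:l1plus}); that is, there are $\epsilon>0$ and $T\in\mathcal{L}(X,Y^*)=(X\pten Y)^*$ with $\langle T(x_n),y_n\rangle\geq\epsilon$ for all $n\in\N$. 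In particular $\epsilon\leq\|T\|\,\|x_n\|\,\|y_n\|$ for every $n$, so $\inf_n\|x_n\|>0$ and the sequence $(x_n)_{n\in\N}$ is seminormalized.

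Next I would invoke the structural hypothesis on~$X$: passing to a further subsequence $(x_{n_k})_{k\in\N}$, we may assume that $(x_{n_k})_{k\in\N}$ is an unconditional basic sequence and that $X_0:=\overline{{\rm span}}(\{x_{n_k}:k\in\N\})$ is complemented in~$X$. Thus $(x_{n_k})_{k\in\N}$ is a bounded unconditional basis of~$X_0$, and $(y_{n_k})_{k\in\N}$ is a bounded sequence in~$Y$. Moreover the restriction $T|_{X_0}\in\mathcal{L}(X_0,Y^*)=(X_0\pten Y)^*$ still satisfies $\langle T|_{X_0}(x_{n_k}),y_{n_k}\rangle\geq\epsilon$ for all $k$, so $(x_{n_k}\otimes y_{n_k})_{k\in\N}$ is an $\ell_1^+$-sequence in $X_0\pten Y$. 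Now Lemma~\ref{lem:lplus}, applied with $X_0$ playing the role of~$X$, yields that $(x_{n_k}\otimes y_{n_k})_{k\in\N}$ is an $\ell_1$-sequence in $X_0\pten Y$.

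Finally, since $X_0$ is complemented in~$X$, the inclusion $X_0\hookrightarrow X$ induces an isomorphic embedding of $X_0\pten Y$ onto a (complemented) subspace of $X\pten Y$ (see, e.g., \cite[Proposition~2.4]{rya}), under which $x_{n_k}\otimes y_{n_k}$, computed in $X_0\pten Y$, is sent to $x_{n_k}\otimes y_{n_k}$, computed in $X\pten Y$. As isomorphic embeddings map $\ell_1$-sequences to $\ell_1$-sequences, we conclude that $(x_{n_k}\otimes y_{n_k})_{k\in\N}$ is an $\ell_1$-sequence in $X\pten Y$, which is the subsequence we were looking for.

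I do not expect a serious obstacle here, since the heart of the matter is already Lemma~\ref{lem:lplus}; the only points requiring a little care are (a) checking that $(x_n)_{n\in\N}$ is seminormalized, so that the hypothesis on~$X$ may legitimately be applied, and (b) transferring the $\ell_1^+$-property down to $X_0\pten Y$ and then the resulting $\ell_1$-property back up to $X\pten Y$. The former is immediate from the $\ell_1^+$-witness $T$, while the latter is routine but genuinely uses that $X_0$ is complemented, so that the projective tensor norms of $X\pten Y$ and $X_0\pten Y$ are equivalent on $X_0\otimes Y$.
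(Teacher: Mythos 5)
Your proof is correct and follows essentially the same route as the paper's: pass to an $\ell_1^+$-subsequence, observe that $(x_n)_{n\in\N}$ is then seminormalized, extract an unconditional basic subsequence with complemented span $X_0$, apply Lemma~\ref{lem:lplus} in $X_0\pten Y$, and transfer back via the embedding $X_0\pten Y\to X\pten Y$. Your explicit verification that $\inf_n\|x_n\|>0$ via the witness $T$ is a slightly more direct justification of the seminormalization step than the paper's, but the argument is the same.
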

\begin{proof}
We can assume that $(x_n\otimes y_n)_{n\in \N}$ is an $\ell_1^+$-sequence. Observe that $(x_n)_{n\in \N}$ cannot be norm null 
and so it admits a seminormalized subsequence, 
say $(x_{n_k})_{k\in \N}$. By the assumption on~$X$, we can assume further that $(x_{n_k})_{k\in \N}$ is an unconditional basic sequence
and that $X_0:=\overline{{\rm span}}(\{x_{n_k}:k\in \N\})$ is complemented in~$X$. Hence, 
the operator $\iota_{X_0}\otimes {\rm id}_Y: X_0 \pten Y \to X\pten Y$ is an isomorphism onto a (complemented) subspace of~$X\pten Y$, where 
$\iota_{X_0}:X_0\to X$ is the inclusion operator and ${\rm id}_Y$ is the identity operator on~$Y$
(see, e.g., \cite[Proposition~2.4]{rya}). Now, since $(x_{n_k} \otimes y_{n_k})_{k\in \N}$ is also an $\ell_1^+$-sequence in $X_0\pten Y$, we can apply
Lemma~\ref{lem:lplus} to conclude that $(x_{n_k} \otimes y_{n_k})_{k\in \N}$ is an $\ell_1$-sequence in $X_0\pten Y$, and so
in $X\pten Y$.
\end{proof}

\subsection{Embedding $\ell_1$ into projective tensor products}\label{subsection:l1-embedding}

Let $X$ and $Y$ be Banach spaces. The subspace of $\mathcal{L}(X,Y^*)$ (resp., $\mathcal{L}(Y,X^*)$) consisting
of all compact operators from $X$ to~$Y^*$ (resp., from $Y$ to~$X^*$) will be denoted by $\mathcal{K}(X,Y^*)$ (resp., $\mathcal{K}(Y,X^*)$).
It is well-known (and not difficult to check) that $\mathcal{L}(X,Y^*)=\mathcal{K}(X,Y^*)$
if and only if $\mathcal{L}(Y,X^*)=\mathcal{K}(Y,X^*)$. The reflexivity of $X\pten Y$ is closely related to those equalities. Indeed, 
if both $X$ and $Y$ are reflexive and $\mathcal{L}(X,Y^*)=\mathcal{K}(X,Y^*)$, then $X\pten Y$ is reflexive; conversely, if $X\pten Y$ is reflexive and, in addition, either $X$ or $Y$ 
has the approximation property, then $\mathcal{L}(X,Y^*)=\mathcal{K}(X,Y^*)$ (see, e.g., \cite[Section~4.2]{rya}).
It is an open problem whether the last statement holds without the approximation property assumption. 

As we already mentioned in the introduction, in \cite{emm2} and \cite{bu-alt} one can find similar results where reflexivity is weakened to ``not containing isomorphic copies of~$\ell_1$''. The purpose of this subsection is to provide more direct proofs of those results. 

\begin{theo}\label{theo:Emmanuele}
Let $X$ and $Y$ be Banach spaces such that one of the following conditions holds:
\begin{enumerate}
\item[(i)] either $X$ or $Y$ has the Dunford-Pettis property;
\item[(ii)] $\mathcal{L}(X,Y^*)=\mathcal{K}(X,Y^*)$.
\end{enumerate}
Then:
\begin{enumerate}
\item[(a)] If $(x_n)_{n\in \N}$ and $(y_n)_{n\in \N}$ are weakly Cauchy sequences in~$X$ and~$Y$, respectively, then $(x_n\otimes y_n)_{n\in \N}$
is weakly Cauchy in~$X\pten Y$.
\item[(b)] If $W_1 \sub X$ and $W_2 \sub Y$ are weakly precompact sets, then $W_1\otimes W_2$
is weakly precompact in~$X\pten Y$.
\item[(c)] If $X$ and $Y$ contain no subspace isomorphic to~$\ell_1$, then $X\pten Y$ contains no subspace isomorphic to~$\ell_1$.
\end{enumerate}
\end{theo}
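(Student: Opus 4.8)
The plan is to prove (a) first and then derive (b) and (c) from it by soft arguments. Since $(X\pten Y)^*$ is isometrically $\mathcal{L}(X,Y^*)=\mathcal{L}(Y,X^*)$, establishing (a) amounts to showing that for every $T\in\mathcal{L}(X,Y^*)$ the scalar sequence $\langle T,x_n\otimes y_n\rangle=\langle T(x_n),y_n\rangle$ converges. I would dispose of hypothesis~(ii) first: in that case $T$ is compact, so, $(x_n)_{n\in\N}$ being bounded, the set $\{T(x_n):n\in\N\}$ is relatively norm compact; since $T$ is continuous, $(T(x_n))_{n\in\N}$ is also weakly Cauchy in $Y^*$, and a weakly Cauchy sequence contained in a relatively norm compact set has all its norm cluster points equal, hence converges in norm to some $z\in Y^*$. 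Writing $\langle T(x_n),y_n\rangle=\langle z,y_n\rangle+\langle T(x_n)-z,y_n\rangle$, the first summand converges (because $(y_n)_{n\in\N}$ is weakly Cauchy and $z\in Y^*$) and the second tends to $0$ (because $(y_n)_{n\in\N}$ is bounded and $\|T(x_n)-z\|\to0$).

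Under hypothesis~(i), assume that $Y$ has the Dunford-Pettis property (the case in which $X$ has it being symmetric, after replacing $T$ by the corresponding operator in $\mathcal{L}(Y,X^*)$). Fix $T\in\mathcal{L}(X,Y^*)$: then $(T(x_n))_{n\in\N}$ is weakly Cauchy in $Y^*$ and $(y_n)_{n\in\N}$ is weakly Cauchy in $Y$, so I would invoke the standard reformulation of the Dunford-Pettis property stating that $\lim_n\langle y_n^*,y_n\rangle$ exists whenever $(y_n)_{n\in\N}$ is weakly Cauchy in $Y$ and $(y_n^*)_{n\in\N}$ is weakly Cauchy in $Y^*$; applied to $(y_n)_{n\in\N}$ and $(T(x_n))_{n\in\N}$ this gives the convergence of $\langle T(x_n),y_n\rangle$. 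This is the only delicate point: the familiar ``weakly null times weakly null'' formulation of the Dunford-Pettis property does not pass to the weakly Cauchy setting by merely subtracting weak limits, since those limits live in $Y^{**}$ and $Y^{***}$ rather than in $Y$ and $Y^*$; so I would either quote the weakly Cauchy reformulation from the literature on the Dunford-Pettis property or, failing a convenient reference, isolate it as a short lemma.

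It remains to deduce (b) and (c). For (b): $W_1\otimes W_2$ is bounded, and given any sequence $(x_n\otimes y_n)_{n\in\N}$ in it with $x_n\in W_1$ and $y_n\in W_2$, the weak precompactness of $W_1$ yields a subsequence along which $(x_n)_{n\in\N}$ is weakly Cauchy, and along it the weak precompactness of $W_2$ yields a further subsequence along which $(y_n)_{n\in\N}$ is weakly Cauchy; by~(a), $(x_n\otimes y_n)_{n\in\N}$ is weakly Cauchy along that subsequence, so $W_1\otimes W_2$ is weakly precompact. For (c): if $X$ and $Y$ contain no subspace isomorphic to $\ell_1$, then $B_X$ and $B_Y$ are weakly precompact by Rosenthal's $\ell_1$-theorem, hence $B_X\otimes B_Y$ is weakly precompact by~(b); since $B_{X\pten Y}=\overline{{\rm conv}}(B_X\otimes B_Y)$ and the closed convex hull of a weakly precompact set is again weakly precompact, $B_{X\pten Y}$ is weakly precompact, that is, $X\pten Y$ contains no $\ell_1$-sequence and hence no subspace isomorphic to $\ell_1$.
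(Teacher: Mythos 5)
Your proposal is correct and follows essentially the same route as the paper: part (a) is proved by testing against an arbitrary $T\in\mathcal{L}(X,Y^*)$, splitting into the compact case and the Dunford--Pettis case, and (b), (c) are deduced exactly as in the paper (diagonal subsequences, $B_{X\pten Y}=\overline{{\rm conv}}(B_X\otimes B_Y)$ and Stegall's theorem on convex hulls of weakly precompact sets). The only cosmetic difference is that in the Dunford--Pettis case you invoke the ``weakly Cauchy $\times$ weakly Cauchy'' reformulation as a black box (it is indeed standard and can be found in Diestel's survey cited in the paper), whereas the paper obtains it on the spot via the telescoping estimate $|\langle T(x_n-x_m),y_n\rangle|+|\langle T(x_m),y_m-y_n\rangle|$.
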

\begin{proof}
(a) Fix $T\in \mathcal{L}(X,Y^*)$. Note that for every $n,m\in \N$ we have
\begin{multline}\label{eqn:Cauchy}
	\big| \langle T,x_n\otimes y_n\rangle-\langle T,x_m \otimes y_m\rangle \big|  \\ =
	\big| \langle T(x_n),y_n\rangle-\langle T(x_m),y_m\rangle \big|  \leq 
	\big| \langle T(x_n-x_m),y_n\rangle\big| + \big|\langle T(x_m),y_m-y_n\rangle \big|.
\end{multline}
Suppose that $T$ is compact. Then $T$ is completely continuous, hence we have $\|T(x_n-x_m)\|\to 0$ and so $|\langle T(x_n-x_m),y_n\rangle|\to 0$ as $n,m\to\infty$. In addition, since 
the set $\{T(x_m):m\in \N\} \sub Y^*$ is relatively norm compact and $y_m-y_n\to 0$ weakly in~$Y$ as $n,m\to\infty$, we have
$|\langle T(x_m),y_m-y_n\rangle|\to 0$ as $n,m\to\infty$. From \eqref{eqn:Cauchy} it follows that $|\langle T,x_n\otimes y_n\rangle-\langle T,x_m\otimes y_m\rangle|\to 0$ as $n,m\to\infty$. This proves that  $(x_n\otimes y_n)_{n\in \N}$
is weakly Cauchy in~$X\pten Y$ when $\mathcal{L}(X,Y^*)=\mathcal{K}(X,Y^*)$.

If $Y$ has the Dunford-Pettis property, then we have $|\langle T(x_n-x_m),y_n\rangle|\to 0$ as $n,m\to\infty$
(because $T(x_n-x_m)\to 0$ weakly in~$Y^*$ as $n,m\to \infty$ and $(y_n)_{n\in \N}$ is weakly Cauchy)
and $|\langle T(x_m),y_m-y_n\rangle|\to 0$ as $n,m\to\infty$ (because $(T(x_m))_{m\in \N}$ is weakly Cauchy and
$y_m - y_n \to 0$ weakly in~$Y$ as $n,m\to\infty$). Therefore, from~\eqref{eqn:Cauchy} we get
 $|\langle T,x_n\otimes y_n\rangle-\langle T,x_m\otimes y_m\rangle|\to 0$ as $n,m\to\infty$.
This proves that  $(x_n\otimes y_n)_{n\in \N}$ is weakly Cauchy in~$X\pten Y$ when $Y$ has the Dunford-Pettis property.
By symmetry, the same holds whenever $X$ has the Dunford-Pettis property.

(b) is immediate from~(a).

(c) Note that $B_{X\pten Y}=\overline{{\rm conv}}(B_X \otimes B_Y)$ (see, e.g., \cite[Proposition~2.2]{rya})
and that the closed convex hull of a weakly precompact set in a Banach space is weakly precompact as well, according to a result of Stegall
(see \cite[Addendum]{ros-J-7}). The conclusion now follows from~(b) and the fact that a Banach space contains no subspace
isomorphic to~$\ell_1$ if and only if its closed unit ball is weakly precompact.
\end{proof}

The following result is implicit in the proof of \cite[Theorem~6]{kal74}. Recall that an {\em unconditional expansion of the identity} of a Banach space~$X$ is a sequence
$(P_n)_{n\in \N}$ in~$\mathcal{L}(X)$ such that for each $x\in X$ 
we have $x=\sum_{n\in \N}P_n(x)$, the series being unconditionally convergent in~$X$. 

\begin{pro}\label{pro:BUK}
Let $X$ and $Y$ be Banach spaces and let $(P_n)_{n\in \N}$ be an unconditional expansion of the identity of~$X$ (resp., $Y^*$). 
If $X\pten Y$ contains no complemented subspace isomorphic to~$\ell_1$,
then for each $T\in \mathcal{L}(X,Y^*)$ we have $T=\sum_{n\in \N} T\circ P_n$ (resp., $T=\sum_{n \in \N}P_n \circ T$), 
the series being unconditionally convergent in $\mathcal{L}(X,Y^*)$.
\end{pro}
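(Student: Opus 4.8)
The plan is to reduce the statement to the unconditional convergence of a weakly unconditionally Cauchy series in the dual $(X\pten Y)^*=\mathcal{L}(X,Y^*)$, and then invoke the classical dichotomies relating such series to copies of~$c_0$. It suffices to treat the case where $(P_n)_{n\in\N}$ is an unconditional expansion of the identity of~$X$; the statement about~$Y^*$ is entirely symmetric, with $P_n\circ T$ in place of $T\circ P_n$. First I would record that an unconditional expansion of the identity has uniformly bounded multiplier partial sums: since $\sum_n P_n(x)$ converges unconditionally, hence bounded-multiplier convergently, for each $x\in X$, the family $\{\sum_{n\in F}\theta_n P_n(x):\,F\sub\N\text{ finite},\ |\theta_n|\le 1\}$ is bounded for every fixed~$x$, so by the uniform boundedness principle $\lambda:=\sup\{\|\sum_{n\in F}\theta_n P_n\|_{\mathcal{L}(X)}:\,F\sub\N\text{ finite},\ |\theta_n|\le 1\}<\infty$.

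Next I would check that $\sum_n (T\circ P_n)$ is a weakly unconditionally Cauchy series in~$(X\pten Y)^*$: for any finite $F\sub\N$ and signs $\epsilon_n=\pm1$ we have $\sum_{n\in F}\epsilon_n(T\circ P_n)=T\circ\big(\sum_{n\in F}\epsilon_n P_n\big)$, whose norm is at most $\lambda\|T\|$ (here I use the isometry $\mathcal{L}(X,Y^*)=(X\pten Y)^*$ recalled in the terminology section). Now I would argue that $(X\pten Y)^*$ contains no isomorphic copy of~$c_0$: by the Bessaga--Pe\l czy\'nski theorem, if the dual of a Banach space~$Z$ contains a copy of~$c_0$ then $Z$ contains a \emph{complemented} copy of~$\ell_1$; with $Z=X\pten Y$ this contradicts the hypothesis. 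Since a Banach space that contains no copy of~$c_0$ has the property that every weakly unconditionally Cauchy series in it is unconditionally norm convergent (another classical theorem of Bessaga--Pe\l czy\'nski; see, e.g., Diestel's monograph on sequences and series), the series $\sum_n(T\circ P_n)$ converges unconditionally in~$\mathcal{L}(X,Y^*)$; evaluating its partial sums at an arbitrary $x\in X$ and using $\sum_n P_n(x)=x$ shows the sum equals~$T$. The case of~$Y^*$ is identical, using $\sum_{n\in F}\epsilon_n(P_n\circ T)=\big(\sum_{n\in F}\epsilon_n P_n\big)\circ T$.

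There is no single hard step here: the proof is a short piece of bookkeeping built on the two Bessaga--Pe\l czy\'nski dichotomies, and the only points requiring a little attention are the uniform bound $\lambda<\infty$ and the precise use of ``complemented'' in the hypothesis, which is exactly what the first dichotomy delivers. If one wished to keep the argument self-contained --- in the spirit of the proof of \cite[Theorem~6]{kal74} to which the statement is attributed --- one could replace the second appeal to Bessaga--Pe\l czy\'nski by a direct gliding-hump construction: a failure of unconditional convergence of $\sum_n(T\circ P_n)$ would yield disjoint finite blocks $F_1<F_2<\cdots$ and signs for which the block sums $v_j:=\sum_{n\in F_j}\epsilon_n(T\circ P_n)$ stay bounded away from~$0$ in norm while $\sum_j v_j$ is still weakly unconditionally Cauchy; since $\sum_{n\in F_j}\epsilon_n P_n(x)\to 0$ in norm as $j\to\infty$ for each $x$ (by unconditional convergence), the sequence $(v_j)$ is weak$^*$ null, and the usual biorthogonal/perturbation argument then produces a sequence in~$B_{X\pten Y}$ equivalent to the unit vector basis of~$\ell_1$ and spanning a complemented subspace of~$X\pten Y$, contradicting the hypothesis.
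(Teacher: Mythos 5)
Your proposal is correct and follows essentially the same route as the paper's proof: uniform boundedness to get a weakly unconditionally Cauchy series $\sum_n T\circ P_n$ in $\mathcal{L}(X,Y^*)=(X\pten Y)^*$, the duality between complemented copies of $\ell_1$ in $X\pten Y$ and copies of $c_0$ in the dual, and the Bessaga--Pe{\l}czy\'nski theorem to upgrade weak unconditional Cauchyness to unconditional convergence. The only cosmetic difference is that you bound the multiplier partial sums $\sum_{n\in F}\theta_n P_n$ uniformly on $X$ before composing with $T$, whereas the paper applies the uniform boundedness principle directly to the operators $T_J=\sum_{n\in J}T\circ P_n$.
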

\begin{proof}
Bearing in mind the identification of $(X\pten Y)^*$ and $\mathcal{L}(X,Y^*)$, together with the fact
that a Banach space contains no complemented subspace isomorphic to~$\ell_1$ if and only if
its dual contains no subspace isomorphic to~$c_0$ (see, e.g., \cite[Theorem~4.68]{ali-bur}), 
the assumption turns out to be equivalent to the fact that $\mathcal{L}(X,Y^*)$ contains no subspace isomorphic to~$c_0$.

Suppose that $(P_n)_{n\in \N}$ is an unconditional expansion of the identity of~$X$
(the other case is analogous) and fix $T\in \mathcal{L}(X,Y^*)$. Define $T_J:=\sum_{n\in J} T\circ P_n$ for every finite set $J \sub \N$. For each $x\in X$ we have 
$\sup\{\|T_J(x)\|_{Y^*}: \, J \sub \N \text{ finite}\}<\infty$,
because the series $\sum_{n\in \N} T(P_n(x))$ is unconditionally convergent in~$Y^*$ (with sum $T(x)$).
By the uniform boundedness principle, 
$$
	\sup\{\|T_J\|_{\mathcal{L}(X,Y^*)}:J \sub \N \text{ finite}\}<\infty. 
$$
This implies that $\sum_{n\in \N} T\circ P_n$
is a weakly unconditionally Cauchy series in~$\mathcal{L}(X,Y^*)$, that is, 
for every $\varphi \in \mathcal{L}(X,Y^*)^*$ we have $\sum_{n\in \N}|\langle \varphi,T\circ P_n\rangle|<\infty$.
Since $\mathcal{L}(X,Y^*)$ contains no subspace isomorphic to~$c_0$, 
we conclude that the series $\sum_{n\in \N} T\circ P_n$ is unconditionally convergent in~$\mathcal{L}(X,Y^*)$
(see, e.g., \cite[Theorem~4.49]{ali-bur}). Clearly, its sum equals~$T$.
\end{proof}

Clearly, if a Banach space~$X$ admits an unconditional basis or just 
an unconditional FDD (i.e., unconditional finite-dimensional decomposition), then there is 
an {\em unconditional finite-dimensional expansion of the identity} of~$X$, that is, an unconditional expansion of the identity
consisting of finite rank operators. Of course, this implies that $X$ has the approximation property.
As an immediate consequence of Proposition~\ref{pro:BUK}, we have:

\begin{theo}\label{theo:Bu}
Let $X$ and $Y$ be Banach spaces such that either $X$ or $Y^*$ admits an unconditional finite-dimensional expansion of the identity. 
If $X\pten Y$ contains no complemented subspace isomorphic to~$\ell_1$,
then $\mathcal{L}(X,Y^*)=\mathcal{K}(X,Y^*)$.
\end{theo}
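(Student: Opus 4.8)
The plan is to obtain the statement directly from Proposition~\ref{pro:BUK}, using only the fact that an unconditional finite-dimensional expansion of the identity consists of finite rank operators together with the elementary observation that a norm limit of finite rank operators is compact.

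First I would treat the case in which $X$ admits an unconditional finite-dimensional expansion of the identity $(P_n)_{n\in\N}$, so that each $P_n\in\mathcal{L}(X)$ has finite rank. Fix an arbitrary $T\in\mathcal{L}(X,Y^*)$. Since $X\pten Y$ contains no complemented subspace isomorphic to~$\ell_1$, Proposition~\ref{pro:BUK} gives $T=\sum_{n\in\N}T\circ P_n$ with the series unconditionally convergent in $\mathcal{L}(X,Y^*)$; in particular the ordinary partial sums $S_N:=\sum_{n=1}^{N}T\circ P_n$ converge to~$T$ in the operator norm as $N\to\infty$. Each $T\circ P_n$ has finite rank, being the composition of~$T$ with a finite rank operator, hence each $S_N$ has finite rank, and therefore $T$ is a norm limit of finite rank operators and so compact. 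As $T$ was arbitrary, $\mathcal{L}(X,Y^*)=\mathcal{K}(X,Y^*)$.

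The remaining case, in which $Y^*$ admits an unconditional finite-dimensional expansion of the identity $(P_n)_{n\in\N}$, is handled symmetrically: Proposition~\ref{pro:BUK} now yields $T=\sum_{n\in\N}P_n\circ T$ with unconditional convergence in $\mathcal{L}(X,Y^*)$, the partial sums $\sum_{n=1}^{N}P_n\circ T$ are finite rank, and letting $N\to\infty$ shows that $T$ is compact. There is really no obstacle in this argument; the only point that deserves to be spelled out is that unconditional convergence of a series in a Banach space forces norm convergence of its sequence of partial sums, which is what lets the finite rank operators $S_N$ approximate~$T$ in operator norm. All the substance of the theorem is contained in Proposition~\ref{pro:BUK}.
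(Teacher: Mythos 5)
Your argument is correct and is exactly the intended one: the paper states Theorem~\ref{theo:Bu} as an ``immediate consequence'' of Proposition~\ref{pro:BUK}, and your write-up simply supplies the routine details (unconditional convergence gives norm convergence of the partial sums, which are finite rank, so $T$ is compact). Nothing further is needed.
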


The same argument yields the following:

\begin{rem}\label{rem:ideal}
Let $X$ and $Y$ be Banach spaces such that either $X$ or $Y^*$ admits an unconditional expansion of the identity consisting of elements of 
some norm closed operator ideal~$\mathcal{A}$. If $X\pten Y$ contains no complemented subspace isomorphic to~$\ell_1$,
then all elements of $\mathcal{L}(X,Y^*)$ belong to~$\mathcal{A}$.
\end{rem}

We finish the paper with a question which is open to the best of our knowledge.

\begin{question}\label{question:l1}
Let $X$ and $Y$ be Banach spaces such that $X\pten Y$ contains no complemented subspace isomorphic to~$\ell_1$.
Does the equality $\mathcal{L}(X,Y^*)=\mathcal{K}(X,Y^*)$ hold? What if, in addition, either $X$ or $Y^*$ has the approximation property?
\end{question}

\subsection*{Acknowledgements}

The authors thank A. Avil\'es and G. Mart\'inez-Cervantes for fruitful conversations on the topic of the paper.

The research was supported by grants PID2021-122126NB-C32 (J. Rodr\'{i}guez) and 
PID2021-122126NB-C31 (A. Rueda Zoca), funded by \\ MCIN/AEI/10.13039/501100011033 and ``ERDF A way of making Europe'', and also  
by grant 21955/PI/22 (funded by Fundaci\'on S\'eneca - ACyT Regi\'{o}n de Murcia).
The research of A. Rueda Zoca was also supported by grants FQM-0185 and PY20\_00255 (funded by Junta de Andaluc\'ia).


\bibliographystyle{amsplain}

\end{document}